\font\Bbb=msbm10 scaled \magstep 2
\def\C{\hbox{\Bbb C}}
\def\R{\hbox{\Bbb R}}
\def\Z{\hbox{\Bbb Z}}
\def\K{\hbox{\Bbb K}}
\def\Log{\hbox{\rm Log}}
\newtheorem{theorem}{\bf Theorem}
\newtheorem{example}[theorem]{\bf Example}
\newtheorem{proposition}[theorem]{\bf Proposition}
\newtheorem{definition}[theorem]{\bf Definition}
\newtheorem{conjecture}[theorem]{\bf Conjecture}
\newcommand{\val}{\mathop{\rm val}\nolimits}
\newcommand{\Val}{\mathop{\rm Val}\nolimits}
\newcommand{\Arg}{\mathop{\rm Arg}\nolimits}
\newcommand{\Verte}{\mathop{\rm Vert}\nolimits}
\numberwithin{equation}{section} \numberwithin{theorem}{section}
\title{Amoeba-shaped polyhedral complex of an algebraic hypersurface}
\author{Mounir Nisse and Timur Sadykov}
\address{School of Mathematics,
\newline \indent Korea Institue for Advanced Study,
\newline \indent 87 Hoegiro Dongdaemun-gu, Seoul,
\newline \indent 130-722, Republic of Korea}
\email{mounir.nisse@gmail.com}
\address{Department of Mathematics
\newline \indent and Computer Science,
\newline \indent Plekhanov Russian University
\newline \indent 115054, Moscow, Russia}
\email{Sadykov.TM@rea.ru}
\thanks{The second author was supported by the grant of the
President of the Russian Federation for state support of
leading scientific schools NSh-9149.2016.1 and the grant
of the Government of the Russian Federation for investigations
under the guidance of the leading scientists of the
Siberian Federal University (contract No.~14.Y26.31.0006)}
\begin{document}

\begin{abstract}
Given a complex algebraic hypersurface~$H$, we introduce a
polyhedral complex which is a subset of the Newton polytope of the
defining polynomial for~$H$ and enjoys the key topological and
combinatorial properties of the amoeba of~$H.$ We provide an explicit
formula for this polyhedral complex in the case when the spine of the amoeba
is dual to a triangulation of the Newton polytope of the defining polynomial.
In particular, this yields a description of the polyhedral complex when the
hypersurface is optimal~\cite{FPT}.
\end{abstract}

\maketitle

\section*{Introduction}

Amoebas of complex algebraic varieties have attracted
substantial attention in the recent years after their
inception in~\cite{GKZ}.
Being a semi-analytic subset of the real space, the amoeba
carries a lot of geometric, algebraic, topological, and combinatorial
information on the corresponding algebraic variety~\cite{Mikhalkin}.
The degeneration of the amoeba of a complex algebraic variety
leads to the concept of a tropical algebraic variety providing
an important link between complex analysis and enumerative algebraic
geometry~\cite{Mikhalkin-Topology,Mikhalkin-JAMS}.
Amoebas of algebraic hypersurfaces possess rich analytic and combinatorial
structure reflected in their spines, contours, and tentacles.
They appear in numerous applications in real algebraic geometry, complex analysis,
mirror symmetry and in several other areas. Moreover, they are naturally
linked to the geometry of Newton polytopes, which can be seen in particular with
Viro's patchworking principle (i.e., tropical localization) based on the combinatorics
of subdivisions of convex lattice polytopes. Also, certain tropical varieties
can be seen as a limiting aspect (or "degeneration") of amoebas of algebraic varieties.
For example, complex curves viewed as Riemann surfaces turn to metric graphs
(one-dimensional combinatorial objects), and $n$-dimensional complex varieties
turn to $n$-dimensional polyhedral complexes with some properties (see~\cite{IMS} and~\cite{MS}).
Amoebas have their similar objects in the real torus called {\em coamoebas},
which are the projection of algebraic varieties onto the real torus which
have many interesting and nice properties (see~\cite{NS}).

Despite the simple definition, efficient computation of the amoeba of
a given algebraic variety represents a task of formidable computational
complexity. Various approaches have been recently tried to compute the
shape of an amoeba~\cite{Bogdanov-Kytmanov-Sadykov,Theobald} or approximate it
by simpler geometric objects~\cite{FMMW,Purbhoo,Theobald-deWolff}.
The fundamental problems addressed in numerous papers are the detection
of the topological type of an amoeba, the membership problem for a given
connected component of an amoeba complement, and detection of the
order~\cite{FPT} of such a component in the case of hypersurface amoebas.

Alongside with the definition of unbounded affine amoeba of an algebraic hypersurface,
a competing definition of compactified amoeba has been introduced in~\cite{GKZ}.
While the affine amoeba of a hypersurface is its Reinhardt diagram in the logarithmic
scale, its compactified amoeba is defined to be the image of the hypersurface
under the moment map~\cite{Guillemin} providing a homeomorphism
between the Newton polytope of the defining polynomial of that hypersurface
and the positive orthant of the real vector space. Being topologically equivalent
to the standard affine amoeba, its compactified counterpart often has the
substantial disadvantage of exhibiting complement components of very different
relative size (see examples~\ref{ex:x+30xy+20x^2y+x^3y+y^2}
and~\ref{ex:x+x2+y+xy3+x4y2+3x3y+10xy+10x2y+10xy2+15x2y2+10x3y2} below).
This makes it difficult to work with compactified amoebas in a computationally
reliable way and probably explains the focus of research on affine amoebas.

In the present paper we introduce the definition of an amoeba-shaped polyhedral complex
of an algebraic hypersurface. Like the compactified amoeba, this polyhedral complex
is a subset of the Newton polytope of the defining polynomial of the hypersurface.
Besides, it is a deformation retract of the compactified amoeba and provides the
straightforward solution to the membership problem: the order of a connected component
in its complement is itself a point in this component.
We provide an explicit formula for this polyhedral complex in the case
when the hypersurface~$H$ is optimal~\cite{Bogdanov-Sadykov,FPT}.
We also give a topological description of the complement of the affine amoebas of a class of
algebraic hypersurfaces in combinatorial terms naturally and strongly related
to their Newton polytopes and the coefficients of their defining polynomials.

Pictures of amoebas in the paper have been created in MATLAB~R2017a.
The authors thank D.\,Bogdanov for providing a helpful online tool
for automated generation of MATLAB code which is available for free public use at
http://dvbogdanov.ru/?page=amoeba.

\medskip

{\bf Acknowledgement.} A large part of this paper was written during the second author's visits
to Seoul in 2017. The authors thank Korea Institute for Advanced Study
for providing excellent conditions for research and writing.


\section{Notation and preliminaries}

Throughout the paper, we denote by~$n$ the number of $x\in\C^n$
variables. For $x=(x_1,\ldots,x_n)$ and
$\alpha=(\alpha_1,\ldots,\alpha_n),$ we denote by~$x^\alpha$ the
monomial $x_{1}^{\alpha_1}\ldots x_{n}^{\alpha_n}.$

\begin{definition}
\label{def:amoeba} \rm The {\it amoeba}~$\mathcal{A}_f$ of a
Laurent polynomial~$f(x)$ (or of the algebraic hypersurface $\{
f(x)=0 \}$) is defined to be the image of the
hypersurface~$f^{-1}(0)$ under the map ${\rm Log } :
(x_1,\ldots,x_n)\mapsto (\log |x_1|,\ldots,\log |x_n|).$
\end{definition}

For $n>1,$ the amoeba of a polynomial is a closed unbounded semi-analytic
subset of the real vector space~$\R^n.$
Throughout the paper we will often call such amoebas {\it affine} in order
to distinguish them from the compactified and the weighted compactified
counterparts.

The following result shows that the Newton
polytope~$\mathcal{N}_{p(x)}$ reflects the structure of the
amoeba~$\mathcal{A}_{p(x)}$ \cite[Theorem~2.8 and
Proposition~2.6]{FPT}.

\begin{theorem}\label{3thmfptestimate} {\rm (See \cite{FPT}.)}
Let~$p(x)$ be a Laurent polynomial and let~$\{M\}$ denote the
family of connected components of the amoeba
complement~$^c\!\mathcal{A}_{p(x)}.$ There exists an injective
function $\nu : \{M\}\rightarrow \Z^n \cap \mathcal{N}_{p(x)}$
such that the cone which is dual to~$\mathcal{N}_{p(x)}$ at the
point~$\nu(M)$ coincides with the recession cone of~$M.$ In
particular, the number of connected components of
$^c\!\mathcal{A}_{p(x)}$ cannot be smaller than the number of
vertices of $\mathcal{N}_{p(x)}$ and cannot exceed the number of
integer points in $\mathcal{N}_{p(x)}.$
\end{theorem}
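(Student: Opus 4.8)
The plan is to establish the injective map $\nu$ via the theory of Ronkin functions and orders of amoeba complement components, then verify the duality between the dual cone at $\nu(M)$ and the recession cone of $M$, and finally derive the counting bounds as immediate consequences.

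The plan is to construct the map $\nu$ through the \emph{order} of a complement component, defined by a logarithmic residue integral, and to extract all three assertions from the convexity of the Ronkin function together with an asymptotic domination argument.

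First I would fix a point $u$ in a component $M\subset{}^{c}\mathcal{A}_{p(x)}$ and set
\[
\nu_k(u)=\frac{1}{(2\pi\sqrt{-1})^{\,n}}\int_{\Log^{-1}(u)}\frac{x_k\,\partial p/\partial x_k}{p(x)}\,\frac{dx_1}{x_1}\wedge\cdots\wedge\frac{dx_n}{x_n},\qquad k=1,\dots,n.
\]
Since $\Log^{-1}(u)$ is a real $n$-torus disjoint from $\{p(x)=0\}$ when $u\notin\mathcal{A}_{p(x)}$, the multidimensional argument principle shows that each $\nu_k(u)$ is an integer and that $\nu(u)=(\nu_1(u),\dots,\nu_n(u))$ is locally constant; hence it is constant on $M$, and I write $\nu(M)$ for this value. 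This yields a map $\nu\colon\{M\}\to\Z^n$ which I would identify with the gradient of the Ronkin function
\[
N_p(u)=\frac{1}{(2\pi\sqrt{-1})^{\,n}}\int_{\Log^{-1}(u)}\log|p(x)|\,\frac{dx_1}{x_1}\wedge\cdots\wedge\frac{dx_n}{x_n},
\]
which is convex on $\R^n$ and affine on each $M$ with $\nabla N_p|_M=\nu(M)$.

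The injectivity of $\nu$ and the inclusion of its image in $\mathcal{N}_{p(x)}$ both follow from this picture. Each component $M$ is convex, a standard consequence of the convexity of $N_p$, and $N_p$ is affine precisely on the complement of the amoeba. If two distinct components carried the same order, the coinciding end-slopes together with convexity would force $N_p$ to be affine on the whole segment joining them; that segment would then lie in the affine locus, i.e. in the complement, contradicting the distinctness of the two components. This gives injectivity. For the inclusion $\nu(M)\in\mathcal{N}_{p(x)}$, I would estimate the order integral termwise: bounding $x_k\,\partial p/\partial x_k$ over the torus exhibits $\nu(M)$ as a convex combination of the exponent vectors in $\supp(p(x))$, so $\nu(M)\in\mathcal{N}_{p(x)}$, and being integral it lies in $\Z^n\cap\mathcal{N}_{p(x)}$.

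It remains to match the recession cone of $M$ with the cone dual to $\mathcal{N}_{p(x)}$ at $\nu(M)$. Fixing a direction $w$, along the ray $u+tw$ with $t\to+\infty$ the modulus of a monomial $x^{\alpha}$ on $\Log^{-1}(u+tw)$ behaves like $e^{t\langle w,\alpha\rangle}$, so the term indexed by $\nu(M)$ dominates $p(x)$ asymptotically exactly when $\nu(M)$ maximizes $\langle w,\,\cdot\,\rangle$ over $\mathcal{N}_{p(x)}$, which is precisely the condition that $w$ belong to the dual cone at $\nu(M)$; conversely, whenever $x^{\nu(M)}$ dominates, $p$ is zero-free on the torus and the point stays in $M$. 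The two counting bounds are then immediate: injectivity into $\Z^n\cap\mathcal{N}_{p(x)}$ gives the upper bound, while for each vertex $v$ of $\mathcal{N}_{p(x)}$ the dual cone is top-dimensional, so the domination argument produces a genuine unbounded component with $\nu(M)=v$, giving the lower bound. I expect the injectivity step to be the main obstacle: turning the statement that two components cannot share an order into a rigorous argument rests on the fine fact that $N_p$ is affine \emph{only} on the complement, which is the substantive part of the Ronkin--convexity theory and not a formal consequence of the integral formulas.
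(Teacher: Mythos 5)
The paper does not prove this theorem: it is imported verbatim from Forsberg--Passare--Tsikh (the text cites \cite{FPT}, Theorem~2.8 and Proposition~2.6), so there is no internal proof to compare against. Your reconstruction follows the standard route of \cite{FPT} and of Passare--Rullg\aa rd: define the order by the logarithmic residue integral, identify it with the gradient of the Ronkin function $N_p$, and extract injectivity, the cone duality, and the counting bounds from convexity. That is the right skeleton, but three steps do not hold up as written.

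First, injectivity. Coinciding gradients at the endpoints do force $N_p$ to be affine on the segment joining $u'\in M'$ to $u''\in M''$, but ``affine on a segment'' does not place the segment in the locus where $N_p$ is \emph{locally} affine: the convex function $\max(0,x_1)$ is affine along every segment inside the hyperplane $x_1=0$ yet locally affine at no point of it. You need either the upgrade that the minimizer set of the convex function $N_p(u)-\langle\nu,u\rangle$ is convex and contains the two \emph{open} sets $M'$ and $M''$, hence an open connected set on which $N_p$ is affine (and only then does the Passare--Rullg\aa rd theorem apply), or the lemma actually used in \cite{FPT}: the strict monotonicity $\langle\nu(u'')-\nu(u'),u''-u'\rangle>0$ whenever the segment crosses the amoeba, proved by the one-variable argument principle in a coordinate direction after a $\GL_n(\Z)$ change of variables. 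Second, the integrand $x_k(\partial p/\partial x_k)/p$ is not a pointwise convex combination of the $\alpha_k$, so ``termwise bounding'' does not literally exhibit $\nu(M)$ as a point of $\mathcal{N}_{p(x)}$; the clean argument bounds $N_p$ above and below by $\max_\alpha(\log|a_\alpha|+\langle\alpha,u\rangle)$ up to a constant, which pins its gradient to $\mathcal{N}_{p(x)}$. Third, your recession-cone argument presumes that the monomial $x^{\nu(M)}$ occurs in $p$ and dominates the others; but $\nu(M)$ is only guaranteed to be a lattice point of $\mathcal{N}_{p(x)}$, not a point of $\supp(p)$, and even when it does lie in the support on a positive-dimensional face, no single monomial dominates in the normal directions of that face. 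Single-monomial domination is legitimate exactly where you use it for the lower bound (a vertex is always in the support and has a full-dimensional normal cone); for a general $\nu(M)$ the equality of the dual cone with the recession cone again rests on the monotonicity of orders along rays. So the counting bounds survive your argument, but the injectivity and cone statements need these missing lemmas.
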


Throughout the paper, the vector $\nu(M)$ will be called the {\it order}
of the connected component~$M$ in the amoeba complement.

\begin{definition}\label{def:compactifiedAmoeba}\rm (See~\cite[Chapter~6]{GKZ}.)
The {\it compactified amoeba}~$\overline{\mathcal{A}}_{f}$ of a Laurent
polynomial $f(x)=\sum\limits_{s\in S} a_s x^s$ (or, equivalently, of the algebraic hypersurface
$\{ f(x)=0 \}$) is defined to be the image of the
hypersurface~$f^{-1}(0)$ under the {\it moment map}
$$
\mu_{S}(x) := \frac{\sum\limits_{s\in S} s\cdot |x^s|}{\sum\limits_{s\in S} |x^s|}.
$$
\end{definition}
By~\cite[Chapter~6]{GKZ}, the compactified amoeba of a polynomial is a closed
subset of its Newton polytope. The amoeba and the compactified amoeba of a polynomial
are homeomorphic. From the computational point of view both have advantages
and shortcomings. It is in general difficult to locate the position of an affine
amoeba in the real space while the integer convex polytope represents a computationally
much more manageable ambient space. On the other hand, some of the connected components
of the complement to a compactified amoeba can be elusively small as illustrated by
Examples~\ref{ex:x+30xy+20x^2y+x^3y+y^2} and~\ref{ex:x+x2+y+xy3+x4y2+3x3y+10xy+10x2y+10xy2+15x2y2+10x3y2}.
Besides, the connected components of the complement to the compactified amoeba
of a polynomial are in general not convex.

\begin{definition}\label{def:optimalAmoeba}\rm (Cf.~\cite[Definition~2.9]{FPT}.)
An algebraic hypersurface $H\subset(\C^{*})^n,$ $n\geq
2,$ is called {\it optimal} if the number of connected components
of its amoeba complement $^c\!\mathcal{A}_{H}$ equals
the number of integer points in the Newton polytope of the
defining polynomial of~$H.$ We will say that a
polynomial (as well as its amoeba) is optimal if its zero locus is
an optimal algebraic hypersurface.
\end{definition}

Since the amoeba of a polynomial does not carry any information on
the multiplicities of its roots, any one-dimensional amoeba (which
is just a finite set of distinct points in $a_1,\ldots, a_k\in\R$)
can be treated as the amoeba of the polynomial
$\prod\limits_{j=1}^{k}(x-e^{a_k})$ all of whose roots are
positive and distinct. Thus Definition~\ref{def:optimalAmoeba} is
trivial in the univariate case. The correct extension of
Definition~\ref{def:optimalAmoeba} to one dimension is to say that
all the roots of the polynomial in question have different
absolute values.

\begin{definition}\label{def:solidAmoeba}\rm
An algebraic hypersurface $H\subset(\C^{*})^n,$ $n\geq
2,$ is called {\it solid} if the number of connected components
of its amoeba complement $^c\!\mathcal{A}_{H}$ equals
the number of vertices of the Newton polytope of the
defining polynomial of~$H.$ We will say that a
polynomial (as well as its amoeba) is solid if its zero locus is
a solid algebraic hypersurface.
\end{definition}


\section{Explicit analytic formula for the amoeba-shaped polyhedral complex}\label{sec:mainResult}

The {\it Newton polytope}~$\mathcal{N}_{p(x)}$ of a Laurent
polynomial~$p(x)$ is defined to be the convex hull in~$\R^n$ of
the support of~$p(x).$ We will often drop some of the subindices
to simplify the notation.
\begin{definition} \rm Following the
ideas of~\cite{Zharkov}, we define the {\it weighted moment map}
associated with the algebraic hypersurface $\{ x\in\C^n : f(x):=
\sum\limits_{s\in S} a_s x^s = 0\}$ through
$$
\mu_{f}(x) := \frac{\sum\limits_{s\in S} s\cdot |a_s|
|x^s|}{\sum\limits_{s\in S} |a_s| |x^s|}.
$$
\label{def:weightedMomentMap}
\end{definition}
It follows from the general theory of moment maps~\cite{Guillemin}
that $\mu_{f}(\C^n)\subseteq \mathcal{N}_{f}.$
\begin{definition}\rm
By the {\it weighted compactified amoeba} of an algebraic
hypersurface $H=\{x\in\C^n : f(x)=0\}$ we will mean the set
$\mu_{f}(H).$ We denote it by $\mathcal{WCA}(f).$
\label{def:weightedCompactifiedAmoeba}
\end{definition}

Recall that the {\it Hadamard power} of order~$r\in\R$ of a
polynomial $f(x)=\sum\limits_{s\in S} a_s x^s$ is defined to be
$f^{[r]}(x):=\sum\limits_{s\in S} a_{s}^{r} x^s.$

\begin{theorem}\label{thm:polyhedralComplex}
Let $f$ be a polynomial in $\mathbb C[x_1^{\pm 1},\cdots, x_n^{\pm 1}]$ with the Newton
polytope~$\mathcal{N}$  such that $|a_{\alpha}|\geq 1$ for every $\alpha\in \Verte(\mathcal{N})$.
Assume that the function which assigns  to each $\alpha\in \mathcal{N}\cap \mathbb{Z}^n$
the real number $\log |a_{\alpha}|$ is concave, and the subdivision of~$\mathcal{N}$ dual to the
tropical hypersurface~$\Gamma$ associated to the  tropical polynomial $f_{trop}$ defined by:
$$
f_{trop}(\zeta) = \max_{\alpha\in\mathcal{N}\cap \mathbb{Z}^n } \{\log|a_{\alpha}| + \langle\alpha,\zeta\rangle\}
$$
is a triangulation. Then the set-theoretical limit
\begin{equation}\label{polyhedralComplexThroughHadamard}
\mathcal{P}_{f}^{\infty} := \lim_{r\rightarrow \infty} WC\mathcal{A}(f^{[r]})
\end{equation}
is a polyhedral  complex. Moreover,  its complement in~$\mathcal{N}$ has the same topology
of the complement of the amoeba~$\mathcal{A}$ of~$f$, i.e.
$\pi_0 (\mathbb{R}^n\setminus\mathcal{A}) = \pi_0 (\mathcal{N}\setminus \mathcal{P}_{f}^{\infty})$.
In particular, if $n=2$ then~$\mathcal{P}_{f}^{\infty}$ is a simplicial complex.
\end{theorem}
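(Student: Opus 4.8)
The plan is to analyze the weighted moment map of the Hadamard power directly and to recognize the limit as the \emph{medial complex} of the triangulation~$\mathcal{S}$ of~$\mathcal{N}$ dual to~$\Gamma$. Writing $\zeta=\Log(x)$ so that $|x^s|=e^{\langle s,\zeta\rangle}$, one has
$$
\mu_{f^{[r]}}(x)=\frac{\sum_{s} s\, e^{\,r\log|a_s|+\langle s,\zeta\rangle}}{\sum_{s} e^{\,r\log|a_s|+\langle s,\zeta\rangle}}.
$$
First I would substitute $\zeta=r\eta$, so that the exponents become $r\bigl(\log|a_s|+\langle s,\eta\rangle\bigr)$ and the rescaled amoebas $\tfrac1r\mathcal{A}(f^{[r]})$ converge, by Maslov dequantization (Viro patchworking), to the tropical hypersurface $\Gamma=\{\eta:\text{the maximum defining }f_{trop}(\eta)\text{ is attained at least twice}\}$. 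Since $\mu_{f^{[r]}}$ is a soft-max average of the exponent vectors, on the locus where a single index~$\alpha$ dominates the limit is the constant~$\alpha$, while on the locus where the maximizing set is $A(\eta)=\{\alpha_0,\dots,\alpha_m\}$, the vertex set of an $m$-dimensional cell~$\tau$ of~$\mathcal{S}$, only the corresponding monomials survive. This reduces the computation to each cell of~$\mathcal{S}$ separately.

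Next I would carry out the local computation over a cell $\tau=[\alpha_0,\dots,\alpha_m]$. Setting $u_i:=|a_{\alpha_i}^{\,r}x^{\alpha_i}|$ and $t_i:=u_i/\sum_j u_j$, the negligibility of the non-dominant terms gives $\mu_{f^{[r]}}(x)\to\sum_i t_i\alpha_i$. The equation $f^{[r]}(x)=0$ forces the leading terms $\sum_i a_{\alpha_i}^{\,r}x^{\alpha_i}$ to cancel, which is possible precisely when the magnitudes obey the polygon inequality $u_i\le\sum_{j\ne i}u_j$, i.e.\ $t_i\le\tfrac12$ for all~$i$; conversely, since $\alpha_1-\alpha_0,\dots,\alpha_m-\alpha_0$ are linearly independent, every admissible vector~$t$ is realized by a suitable choice of~$\zeta$ together with a choice of phases. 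Hence the contribution of~$\tau$ is the \emph{medial cell}
$$
P_\tau:=\Bigl\{\textstyle\sum_{i=0}^m t_i\alpha_i : t\in\Delta^m,\ t_i\le\tfrac12\ \forall i\Bigr\},
$$
empty for a vertex, the edge midpoint for a $1$-cell, and the medial simplex for higher cells, so that $\mathcal{P}_f^{\infty}=\bigcup_{\dim\tau\ge1}P_\tau$. As the $P_\tau$ meet along the faces prescribed by~$\mathcal{S}$, this is a polyhedral complex in~$\mathcal{N}$. When $n=2$ the cells of~$\mathcal{S}$ are triangles, edges and vertices; each triangle contributes its medial triangle (a $2$-simplex), adjacent medial triangles meet only in shared edge midpoints, and vertices contribute nothing, so $\mathcal{P}_f^{\infty}$ is a simplicial complex.

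For the complement I would match both $\pi_0$-sets with $\Verte(\mathcal{S})$. On the polytope side, each component of $\mathcal{N}\setminus\mathcal{P}_f^{\infty}$ is the region surrounding exactly one vertex~$v$ (the lattice point~$v$, where $t_v>\tfrac12$, lies in it), giving a bijection with $\Verte(\mathcal{S})$. On the amoeba side, the hypotheses, concavity of $\alpha\mapsto\log|a_\alpha|$ and maximality of the regular subdivision, guarantee that the spine of~$\mathcal{A}(f)$ is dual to the same triangulation~$\mathcal{S}$, so by Theorem~\ref{3thmfptestimate} the order map~$\nu$ identifies $\pi_0(\mathbb{R}^n\setminus\mathcal{A})$ with $\Verte(\mathcal{S})$ as well. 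Both identifications send a component to the lattice point it contains, hence are compatible and yield $\pi_0(\mathbb{R}^n\setminus\mathcal{A})=\pi_0(\mathcal{N}\setminus\mathcal{P}_f^{\infty})$.

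The main obstacle I expect is the rigorous justification of the set-theoretic limit in the local computation: one must show, uniformly in~$r$, that the exponentially small non-dominant monomials never push the moment-map image outside~$P_\tau$ (the outer inclusion), while proving that every interior point of~$P_\tau$ is attained by genuine zeros of~$f^{[r]}$ with admissible phases (the inner inclusion), and that the cellwise limits glue to an honest limit rather than merely an inner or outer limit. Controlling the transition zones between cells of~$\mathcal{S}$, where several monomials are comparable, is the delicate part; here the concavity hypothesis, which forces the coefficient-induced subdivision to agree with the Ronkin subdivision of the spine, is exactly what keeps the combinatorics of~$\Gamma$ and of~$\mathcal{A}(f)$ aligned.
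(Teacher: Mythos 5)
Your argument is essentially correct and arrives at the same local model as the paper, but by a genuinely different and more self-contained route. The paper's proof runs through Mikhalkin's degeneration of complex structures: it introduces the maps $H_t$ and complex tropical hypersurfaces $W(V_f)$, invokes Viro's localization (Theorem~\ref{Viro}) to replace $V_{f^{[r]}}$ near each vertex of $\Gamma^{[r]}$ by its truncation to the dual simplex, carries that simplex to the standard one by an element of $AGL_n(\mathbb{Z})$, and then quotes the hyperplane case (Proposition~\ref{prop:triangles}, which in turn rests on \cite[Proposition~4.2]{FPT}). You instead analyze the weighted moment map of $f^{[r]}$ directly as a soft-max convex combination of the exponent vectors and characterize its image over each cell $\tau$ of the dual triangulation by the polygon inequality $t_i\le\frac12$ for the moduli of the dominant monomials. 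This recovers the hyperplane computation of Proposition~\ref{prop:triangles} as a special case rather than using it as a black box, and it yields the explicit description $\mathcal{P}_f^{\infty}=\bigcup_\tau P_\tau$ as the medial complex of the triangulation --- something the paper never writes down, although it is the real content of the theorem. What the paper's route buys is that the delicate interpolation between cells is delegated to Viro's isotopy theorem; in your version the corresponding step (uniform control of the transition zones where several monomials are comparable, and upgrading cellwise inner/outer limits to an honest set-theoretic limit) is explicitly deferred as the ``main obstacle,'' so at that point your argument is no more complete than the paper's one-line ``Using Viro's patchwork we obtain a polyhedral complex'' --- but also no less.

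One shared soft spot deserves flagging. The $\pi_0$ assertion concerns the amoeba $\mathcal{A}$ of $f$ itself, i.e.\ $r=1$, and your step ``the hypotheses guarantee that the spine of $\mathcal{A}(f)$ is dual to the same triangulation'' is exactly the assertion the paper also makes without proof (``the complement of $\mathcal{A}$ and $\mathcal{A}_r$ have the same number of connected components''). The spine is governed by the Ronkin coefficients, not by $\log|a_\alpha|$, so concavity of $\alpha\mapsto\log|a_\alpha|$ does not by itself identify the two subdivisions; both write-ups need an additional argument or a citation to Passare--Rullg{\aa}rd here, and Theorem~\ref{3thmfptestimate} alone only bounds the number of components, it does not produce the bounded ones.
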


\begin{example}\label{ex:x+y+x^2y^2+2xy}
The connected components of the complement of~$\mathcal{P}_{f}^{\infty}$ in
the Newton polytope are not necessarily convex.
\rm The amoeba, the compactified amoeba and the associated
polyhedral complex for the polynomial $x+y+x^2y^2+cxy$ are
depicted in Figures~\ref{fig:x+y+x^2y^2+0,5xy}
and~\ref{fig:x+y+x^2y^2+2xy} for $c=1/2$ and $c=2,$
respectively.

\begin{figure}
\includegraphics[width=4.5cm]{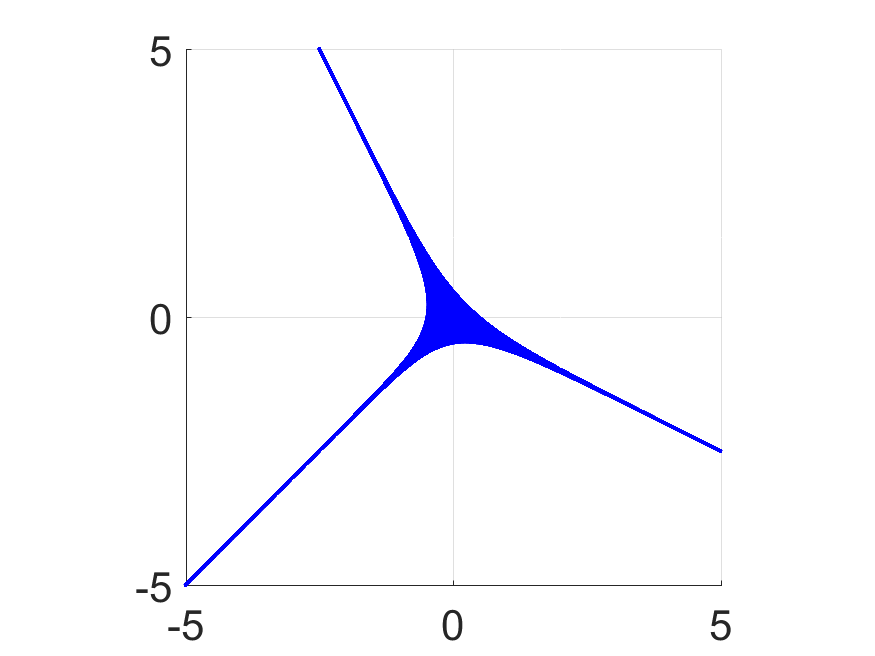}\quad
\includegraphics[width=4.5cm]{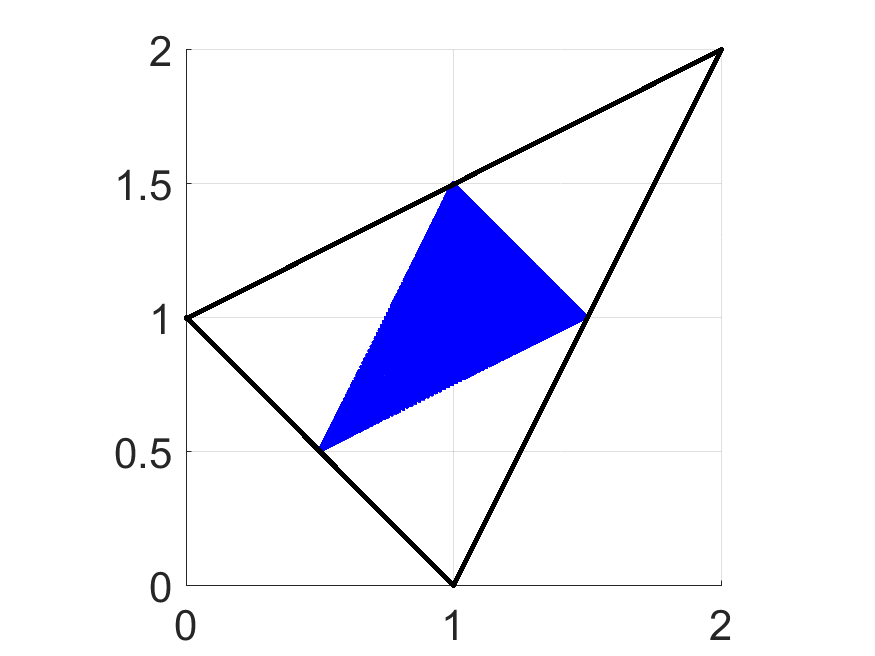}\quad
\caption{The affine and the compactified amoebas of the polynomial
$x+y+x^2y^2+xy/2$. The polyhedral complex coincides with the
compactified amoeba of this polynomial.}
\label{fig:x+y+x^2y^2+0,5xy}
\end{figure}
\begin{figure}
\includegraphics[width=4.5cm]{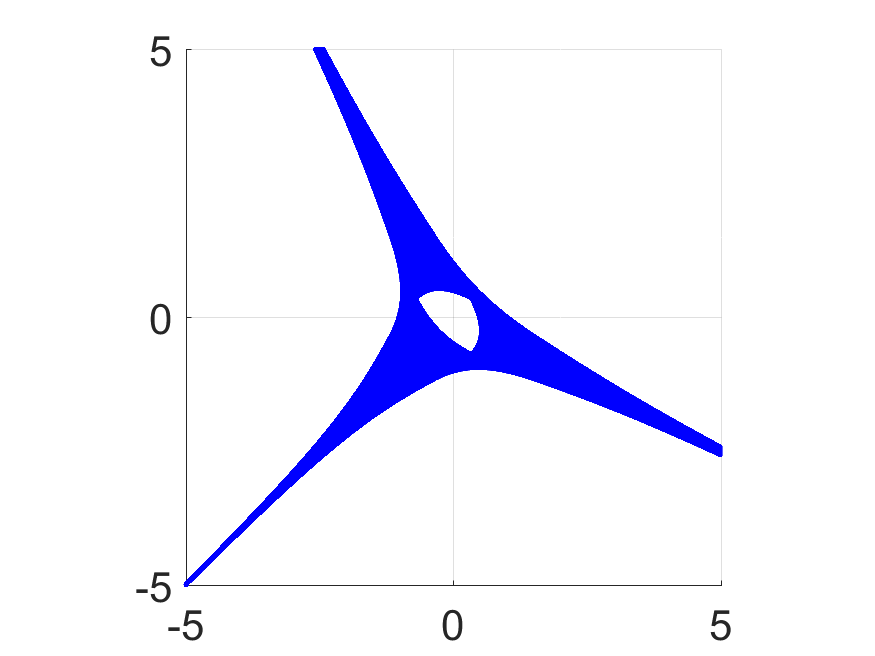}\quad
\includegraphics[width=4.5cm]{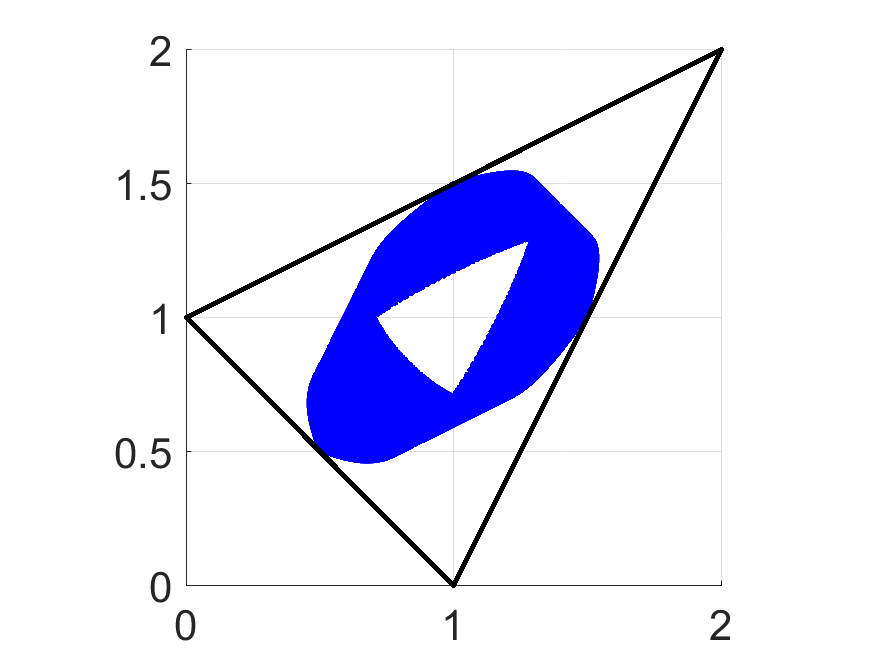}\quad
\includegraphics[width=4.5cm]{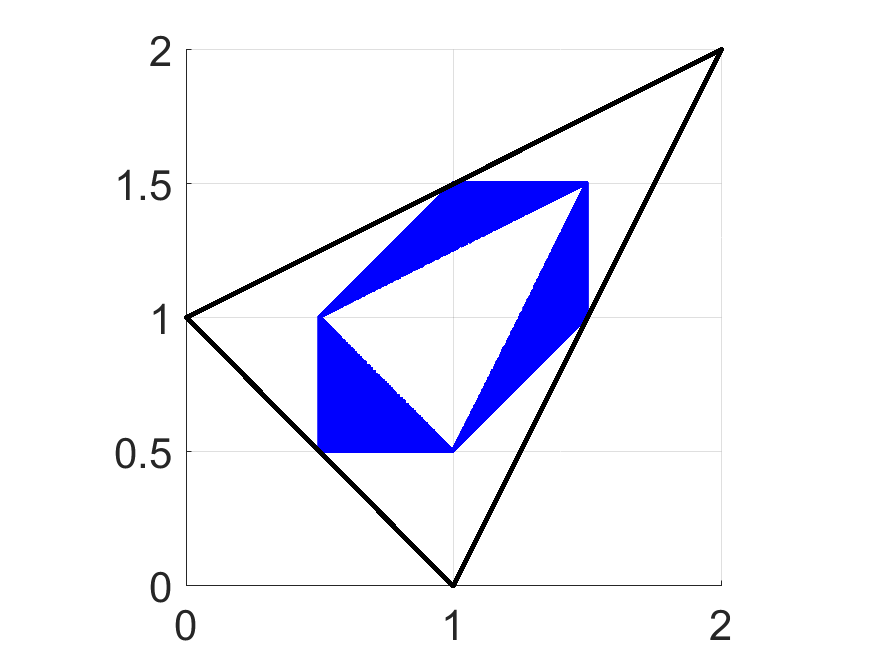}
\caption{The affine amoeba, the compactified amoeba and the
polyhedral complex of the polynomial $x+y+x^2y^2+2xy$}
\label{fig:x+y+x^2y^2+2xy}
\end{figure}

\end{example}

\begin{example}\label{ex:x+y+xy^2+x^2y+cxy}
\rm The amoeba, the compactified amoeba and the associated
polyhedral complex for the polynomial $x+y+xy^2+x^2y+cxy$ are
depicted in Figures~\ref{fig:x+y+xy^2+x^2y+0,5xy}
and~\ref{fig:x+y+xy^2+x^2y+5xy} for $c=1/2$ and $c=5,$ and
respectively.

\begin{figure}
\includegraphics[width=4.5cm]{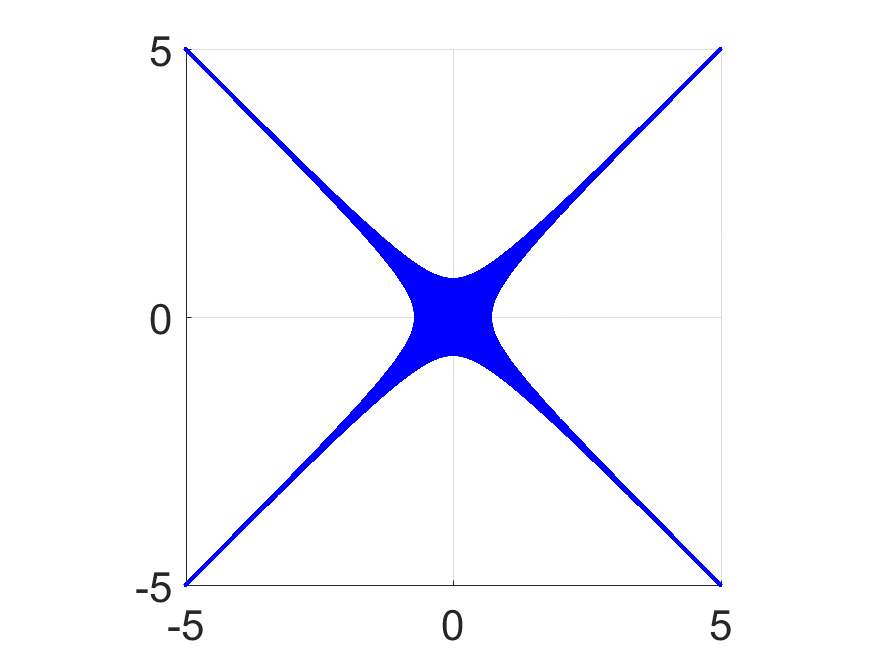}\quad
\includegraphics[width=4.5cm]{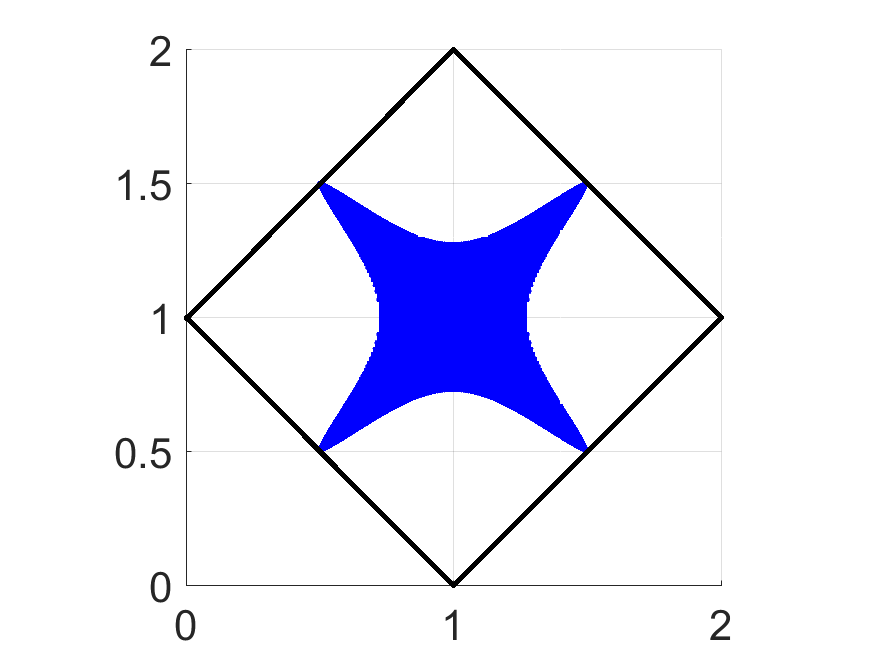}\quad
\includegraphics[width=4.5cm]{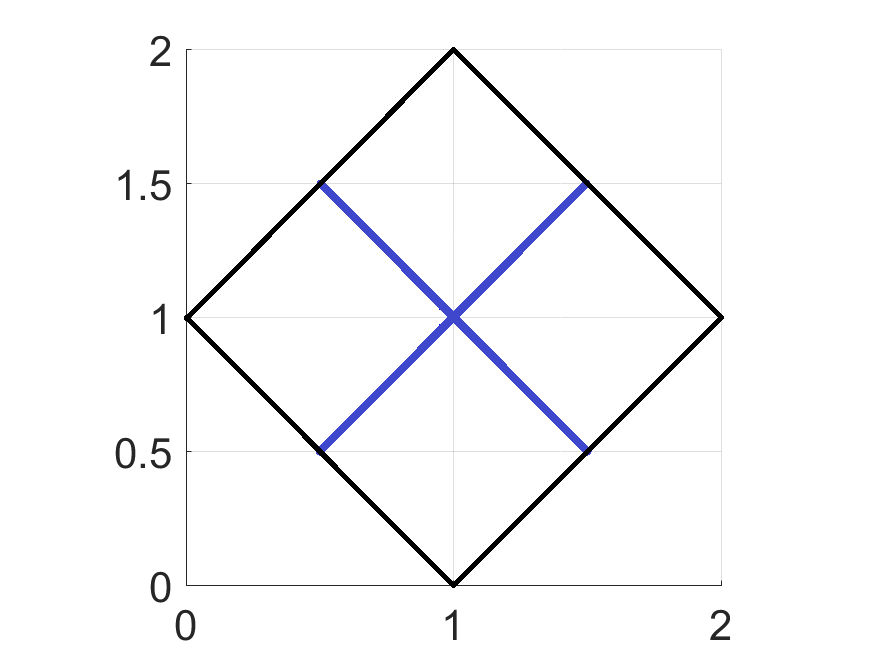}
\caption{The affine amoeba, the compactified amoeba and the
polyhedral complex of the polynomial $x+y+xy^2+x^2y+xy/2$}
\label{fig:x+y+xy^2+x^2y+0,5xy}
\end{figure}
\begin{figure}
\includegraphics[width=4.5cm]{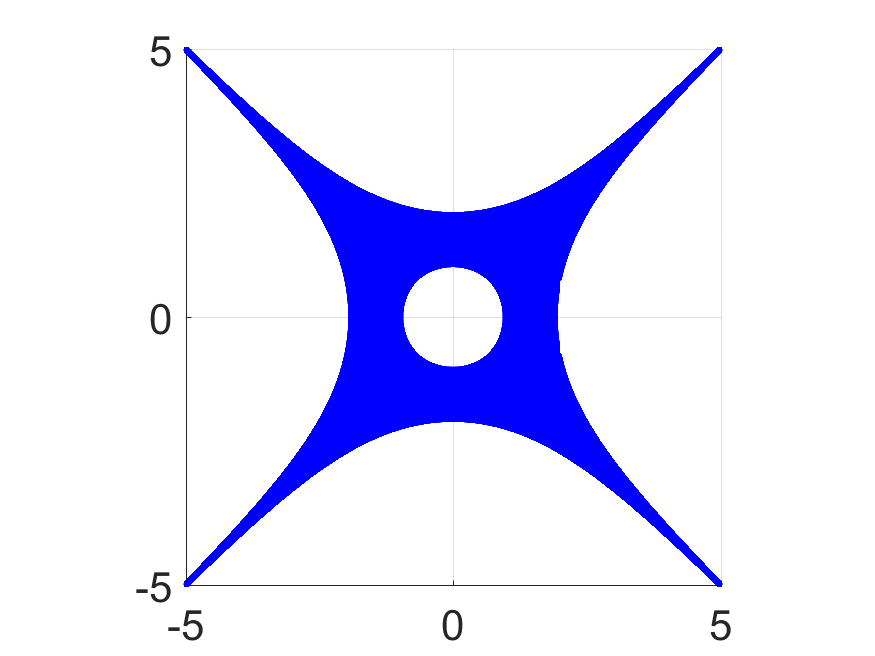}\quad
\includegraphics[width=4.5cm]{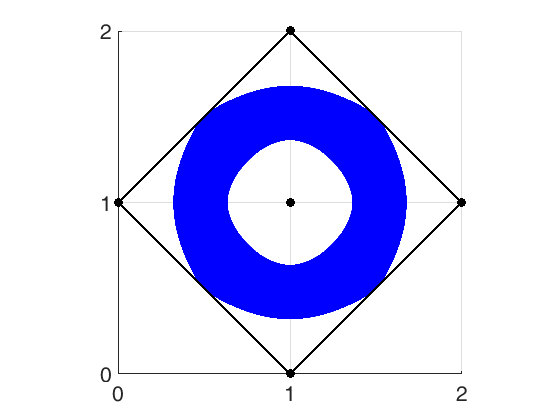}\quad
\includegraphics[width=4.5cm]{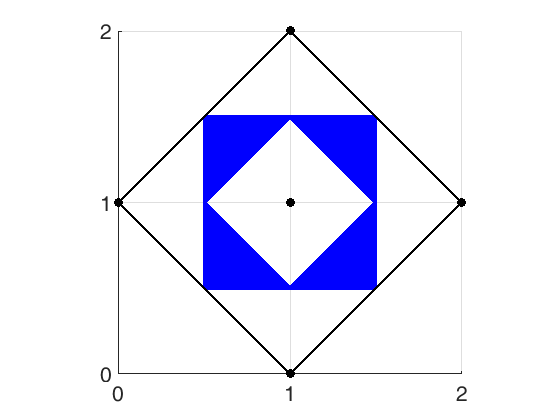}
\caption{The affine amoeba, the compactified amoeba and the
polyhedral complex of the polynomial $x+y+xy^2+x^2y+5xy$}
\label{fig:x+y+xy^2+x^2y+5xy}
\end{figure}

\end{example}

\begin{example}\label{ex:x+30xy+20x^2y+x^3y+y^2}
\rm The zero locus of the polynomial $x+30xy+20x^2y+x^3y+y^2$ is an optimal
hypersurface. Its amoeba, compactified amoeba and the associated
polyhedral complex are depicted in Figure~\ref{fig:x+30xy+20x^2y+x^3y+y^2}.
Although the compactified amoeba is optimal, the three connected components
of its complement that correspond to the vertices of the Newton polytope
are very small in comparison with the two bounded components that fill almost all
of the Newton polygon. In fact, these three components are so small that
it is difficult (yet not impossible) to distinguish them by eye on the presented picture.
This and similar examples motivate the search for a better compact counterpart
of an affine amoeba pursued in the present paper.

\begin{figure}
\includegraphics[width=4.5cm]{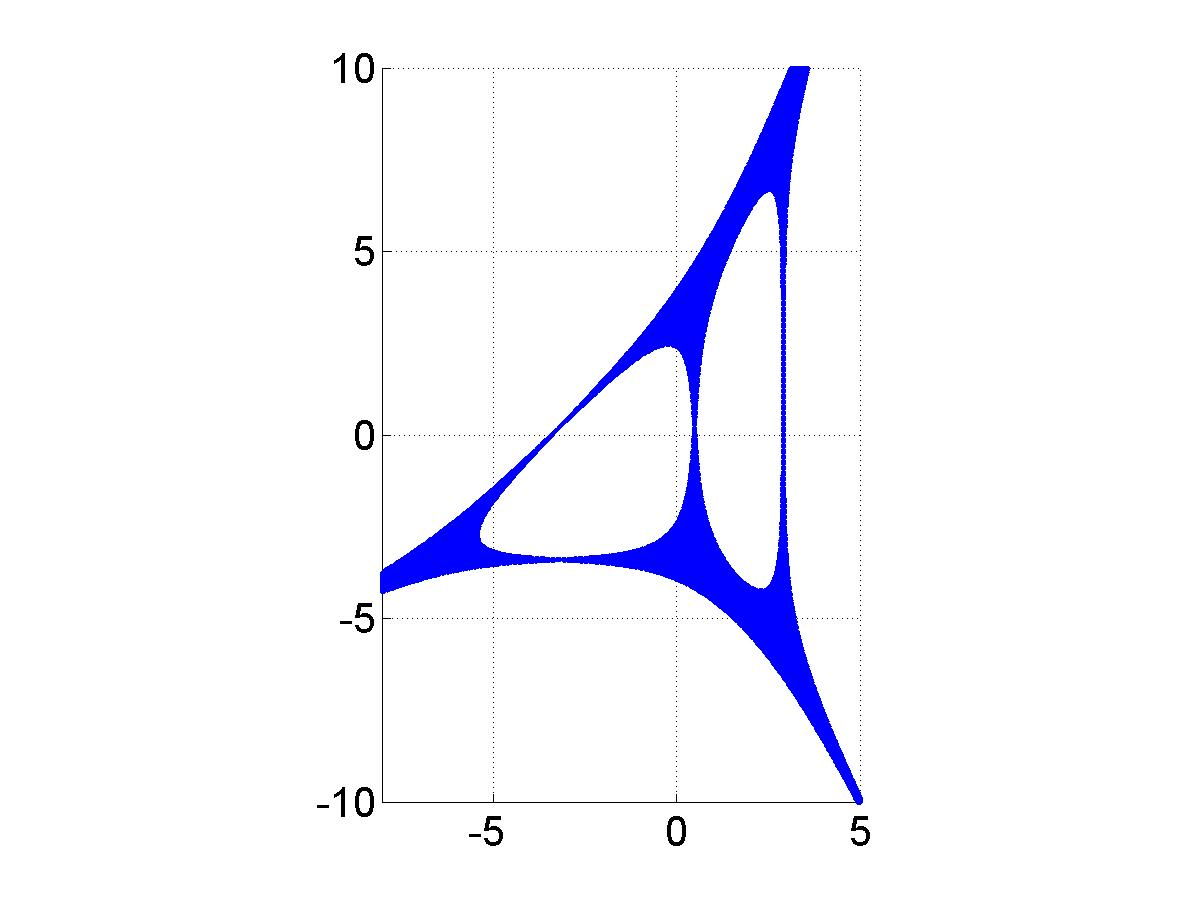}\quad
\includegraphics[width=4.5cm]{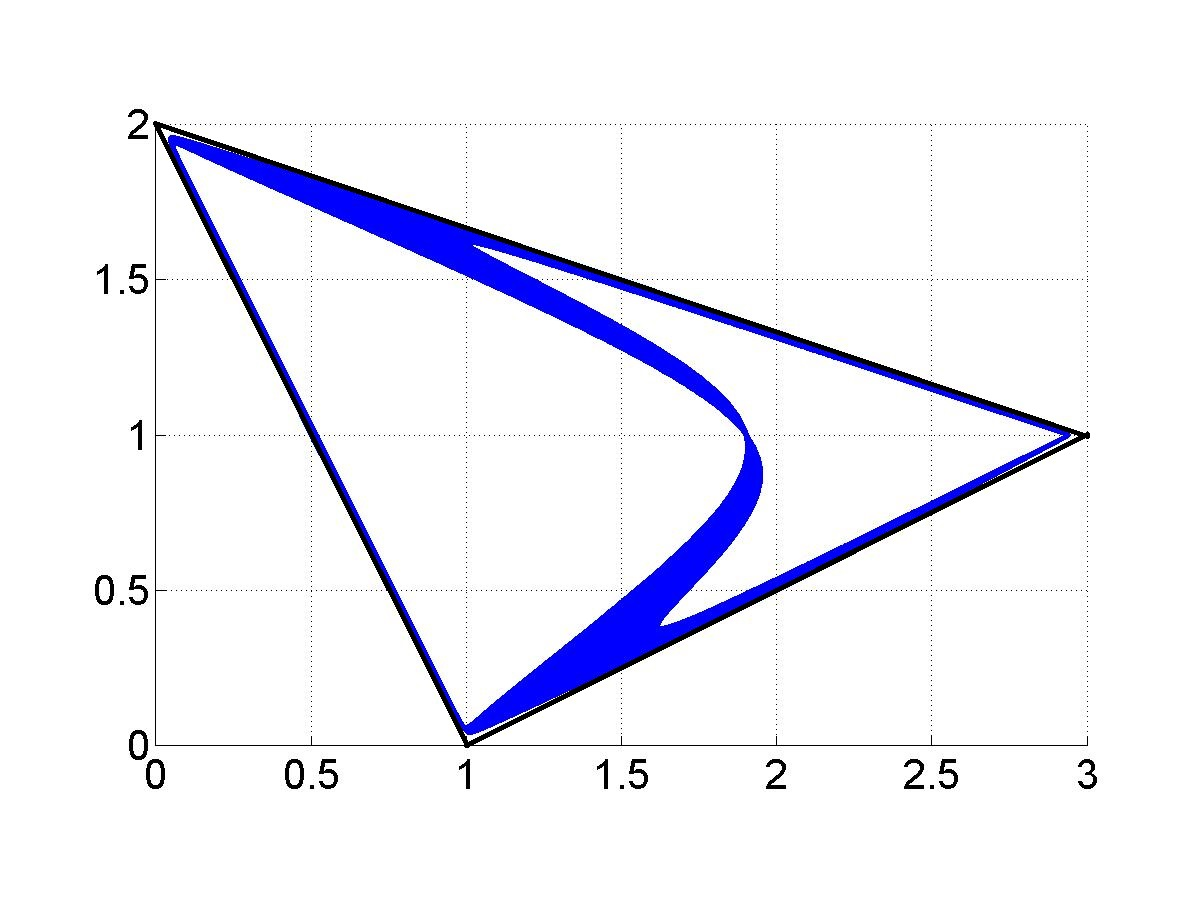}\quad
\includegraphics[width=4.5cm]{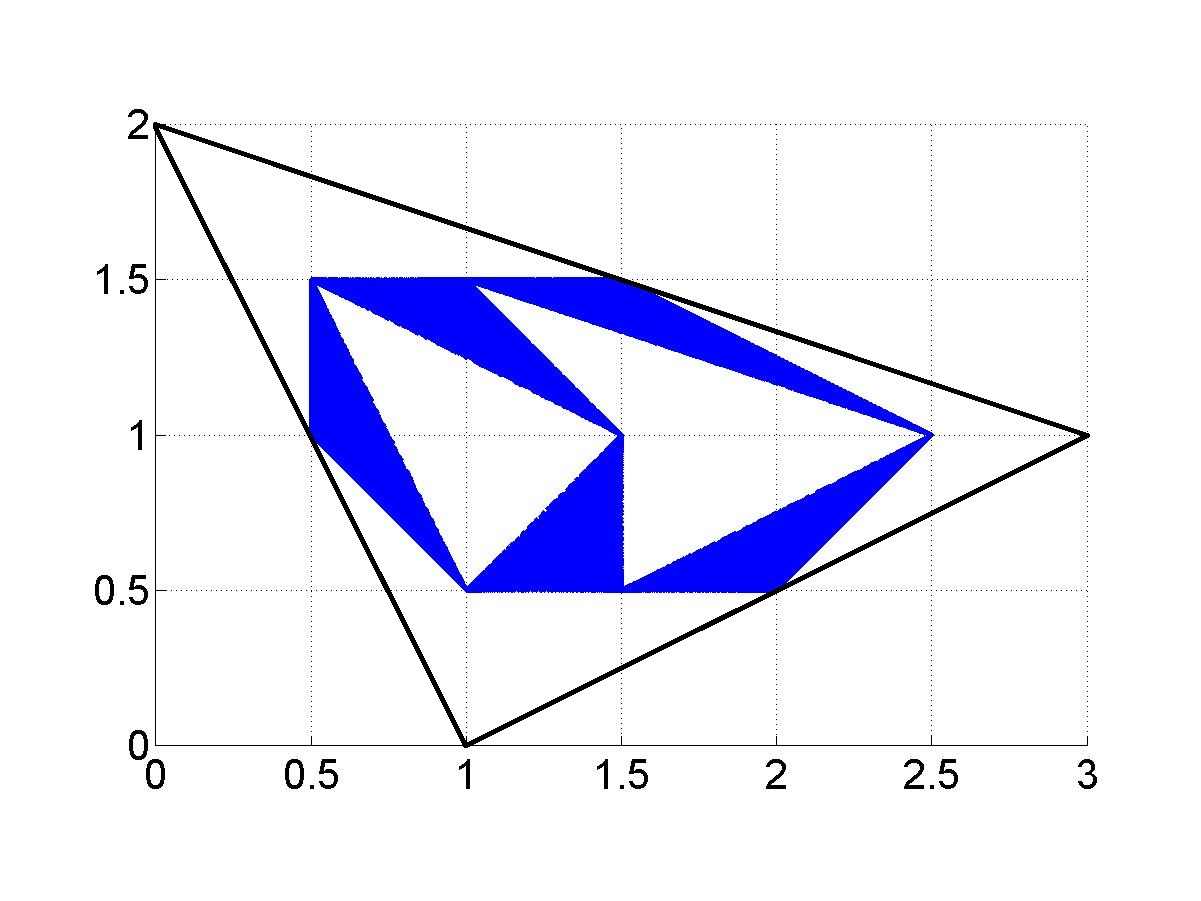}
\caption{The affine amoeba, the compactified amoeba and the
polyhedral complex of the polynomial $x+30xy+20x^2y+x^3y+y^2$}
\label{fig:x+30xy+20x^2y+x^3y+y^2}
\end{figure}

\end{example}

Example~\ref{ex:1+3x+3y+x2y+4x3y+xy2+10x2y2+4xy3} shows that
the sufficient condition of the theorem is not necessary
for the Hadamard power approach to work. Indeed, the polynomial
in this example is neither dense nor optimal.

\begin{proposition}
If $f(x)$ is a vertex polynomial whose Newton polytope~$\mathcal{N}_f$
is a simplex then $\mathcal{P}_{f}^{\infty}$ is given by the convex hull
of the middle points of all 1-dimensional faces of~$\mathcal{N}_f$.
\label{prop:triangles}
\end{proposition}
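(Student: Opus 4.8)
The plan is to compute $WC\mathcal{A}(f^{[r]})$ explicitly and to show that it does not depend on $r$ at all, so that the limit in~(\ref{polyhedralComplexThroughHadamard}) is immediate. Write the vertices of the simplex as $v_0,\ldots,v_n$, so that $f=\sum_{i=0}^{n} a_i x^{v_i}$ with every $a_i\neq 0$ (this is what \emph{vertex polynomial} means: the support is exactly $\Verte(\mathcal{N})$). First I would rewrite the weighted moment map in barycentric coordinates on the simplex: for $x\in(\C^*)^n$ one has $\mu_{f^{[r]}}(x)=\sum_{i} \lambda_i v_i$ with $\lambda_i = |a_i|^r|x^{v_i}|\big/\sum_j |a_j|^r|x^{v_j}|$, so that $\lambda_i\geq 0$ and $\sum_i\lambda_i=1$. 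Thus identifying $WC\mathcal{A}(f^{[r]})$ reduces to deciding which barycentric vectors $\lambda$ are attained by points $x$ lying on the zero locus $\{f^{[r]}=0\}$.

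The analysis then splits into moduli and phases, which may be prescribed independently since they come from $\Log|x|$ and $\Arg(x)$ respectively. For the moduli, I would use $\log|x^{v_i}|=\langle v_i,\Log|x|\rangle$ together with the linear independence of $v_1-v_0,\ldots,v_n-v_0$ (a consequence of $\mathcal{N}$ being a simplex) to conclude that, as $\Log|x|$ ranges over $\R^n$, the ratios $|x^{v_i}|/|x^{v_0}|$ range over all of $\R^n_{>0}$; hence for every $r$ each $\lambda$ in the open simplex is realized by some modulus vector. For the phases, write $a_i^r x^{v_i}=r_i e^{\sqrt{-1}\,\theta_i}$ with $r_i=|a_i|^r|x^{v_i}|$ and $\theta_i = r\,\Arg(a_i)+\langle v_i,\Arg(x)\rangle$, so that $f^{[r]}(x)=0$ becomes $\sum_i r_i e^{\sqrt{-1}\,\theta_i}=0$. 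The crucial point I would isolate as a lemma is that the map $\Arg(x)\mapsto\big(\langle v_i-v_0,\Arg(x)\rangle\big)_{i=1}^{n}$ is a \emph{surjective} endomorphism of the torus $(\R/2\pi\Z)^n$: this needs only that the $v_i-v_0$ are $n$ linearly independent integer vectors (they generate a finite-index sublattice of $\Z^n$), and not unimodularity. Consequently, for any fixed admissible moduli, every prescribed vector of relative phases $\theta_i-\theta_0$ can be realized by a genuine $x\in(\C^*)^n$.

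Combining the two steps, a barycentric vector $\lambda$ lies in the image exactly when the system $\sum_i r_i e^{\sqrt{-1}\,\theta_i}=0$ is solvable in the relative phases, which by the elementary closed-polygon criterion holds if and only if $r_i\leq \sum_{j\neq i} r_j$ for all $i$, i.e. $\lambda_i\leq \tfrac12$ for all $i$ (the boundary $\lambda_i=\tfrac12$ corresponding to the degenerate collinear configuration). Therefore $WC\mathcal{A}(f^{[r]})=\{\sum_i \lambda_i v_i:\ \lambda_i\geq 0,\ \sum_i\lambda_i=1,\ \lambda_i\leq \tfrac12\}$ for every $r$, whence the set-theoretical limit equals this same polytope. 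A short vertex count finishes the identification: a vertex of $\{\lambda_i\leq\tfrac12\}$ must have exactly two coordinates equal to $\tfrac12$ and the remaining ones equal to $0$, and conversely each edge midpoint $m_{ij}=(v_i+v_j)/2$ (barycentric $\lambda_i=\lambda_j=\tfrac12$) is such a vertex; hence $\mathcal{P}_f^{\infty}$ is the convex hull of the midpoints of all $1$-dimensional faces of $\mathcal{N}_f$, as claimed.

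The step I expect to be the main obstacle is the phase analysis: one must guarantee that for \emph{every} modulus vector satisfying the polygon inequalities the required relative phases are simultaneously attainable by an actual point of the torus. This rests entirely on the surjectivity of the phase homomorphism above, and I would take care to prove it from linear independence alone so as not to assume the simplex is unimodular; the reduction of solvability of $\sum_i r_i e^{\sqrt{-1}\,\theta_i}=0$ to the inequalities $r_i\leq\sum_{j\neq i}r_j$, and the treatment of the degenerate boundary locus $\lambda_i=\tfrac12$, are elementary but should be stated with care. As a consistency check one notes that the complement of the medial simplex in $\mathcal{N}$ consists of the $n+1$ corner simplices, matching the $n+1$ components of the amoeba complement of a vertex simplex polynomial predicted by Theorem~\ref{3thmfptestimate}.
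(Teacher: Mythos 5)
Your argument is correct, and it buys something the paper's one-line proof does not. The paper proves Proposition~\ref{prop:triangles} by a monomial change of variables reducing to the hyperplane $a_0+a_1x_1+\cdots+a_nx_n$ and then invoking an argument ``parallel to'' \cite[Proposition~4.2]{FPT}; your closed-polygon criterion $r_i\leq\sum_{j\neq i}r_j$ is exactly the lopsidedness condition underlying that cited proposition, so the combinatorial core is the same. What you do differently is to work on the general simplex directly, in barycentric coordinates, and to isolate the surjectivity of the phase homomorphism $\Arg(x)\mapsto(\langle v_i-v_0,\Arg(x)\rangle)_i$ as the key lemma. This is precisely the point that the paper's reduction glosses over: when the simplex is not unimodular, the monomial substitution is only a finite covering of $(\C^*)^n$, not an automorphism, and your observation that surjectivity (hence equality of images under the moment map) needs only linear independence of the $v_i-v_0$ is what makes the reduction legitimate. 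Your computation also shows the stronger fact that $WC\mathcal{A}(f^{[r]})$ is literally independent of $r$ (and of the coefficients), so the limit in~(\ref{polyhedralComplexThroughHadamard}) is stationary rather than merely convergent, and the vertex count matches Example~\ref{ex:hyperplane} (the octahedron for $n=3$) and the component count of Theorem~\ref{3thmfptestimate}. One small imprecision, shared with the paper: points of the medial polytope with some $\lambda_j=0$ (in particular the edge midpoints themselves) are not in the image of the hypersurface under $\mu_{f^{[r]}}$, only in its closure, since $|x^{v_j}|>0$ on the torus; so either $WC\mathcal{A}$ or the set-theoretic limit should be read as a closure for the stated equality to be exact.
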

\begin{proof}
Use a monomial change of variables to reduce to the hyperplane
case and employ an argument parallel to that in the proof
of~\cite[Proposition~4.2]{FPT}.
\end{proof}

\begin{example}\label{ex:hyperplane} A hyperplane. \rm
The polyhedron associated with the hyperplane $\{1+x+y+z=0\}$ is
depicted in Figure~\ref{fig:S(f)hyperplane} inside the unit
simplex. It is combinatorially equivalent to an octahedron.

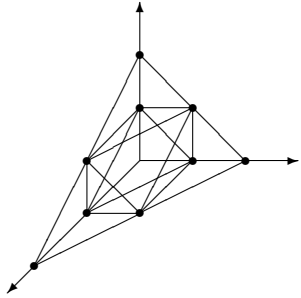
\begin{figure}[htbp]
\begin{center}
\begin{minipage}{4.5cm}
\begin{picture}(120,120)
   \put(60,60){\vector(-1,-1){50}}
   \put(60,60){\vector(1,0){60}}
   \put(60,60){\vector(0,1){60}}
   \put(20,20){\line(2,1){80}}
   \put(20,20){\line(1,2){40}}
   \put(60,100){\line(1,-1){40}}
   \put(40,40){\line(2,1){40}}
   \put(40,40){\line(1,2){20}}
   \put(60,80){\line(1,-1){20}}
   \put(80,80){\line(-1,0){20}}
   \put(80,80){\line(0,-1){20}}
   \put(40,60){\line(1,1){20}}
   \put(40,60){\line(0,-1){20}}
   \put(60,40){\line(1,1){20}}
   \put(60,40){\line(-1,0){20}}
   \put(40,60){\line(2,1){40}}
   \put(40,60){\line(1,-1){20}}
   \put(60,40){\line(1,2){20}}
   \put(20,20){\circle*{3}}
   \put(60,100){\circle*{3}}
   \put(100,60){\circle*{3}}
   \put(40,40){\circle*{3}}
   \put(60,80){\circle*{3}}
   \put(80,60){\circle*{3}}
   \put(80,80){\circle*{3}}
   \put(60,40){\circle*{3}}
   \put(40,60){\circle*{3}}
\end{picture}
\end{minipage}
\end{center}
\caption{The polyhedral complex~$\mathcal{P}_{f}^{\infty}$ in the
three-dimensional hyperplane case $f(x,y,z)=1+x+y+z$}
\label{fig:S(f)hyperplane}
\end{figure}

\end{example}


\section{Proof of the main theorem}

Let~$t$ be a strictly positive real number and~$H_t$ be the
self-diffeomorphism of~$(\mathbb{C}^*)^n$ defined by
\[
\begin{array}{ccccl}
H_t & : &(\mathbb{C}^*)^n&\longrightarrow&(\mathbb{C}^*)^n,\\
    &   &(x_1,\ldots ,x_n)&\longmapsto&(|x_1|^{\frac{1}{\log t}}
\frac{x_1}{|x_1|}, \ldots ,|x_n|^{\frac{1}{\log t}}
\frac{x_n}{| x_n|} ).
\end{array}
\]
which defines a new complex structure on~$(\mathbb{C}^*)^n$
denoted by $J_t = (dH_t)\circ J\circ (dH_t)^{-1}$ where~$J$ is the
standard complex structure.

\noindent A~$J_t$-holomorphic hypersurface~$V_t$ is a hypersurface
holomorphic with respect to the~$J_t$ complex structure on
$(\mathbb{C}^*)^n$. It is equivalent to say that $V_t = H_t(V)$ where
$V\subset (\mathbb{C}^*)^n$ is an holomorphic hypersurface for the
standard complex structure~$J$ on~$(\mathbb{C}^*)^n$.

Recall that the Hausdorff distance between two closed subsets $A,B$
of a metric space $(E,d)$ is defined by:
$$
d_{\mathcal{H}}(A,B) = \max \{  \sup_{a\in A}d(a,B),\sup_{b\in B}d(A,b)\}.
$$
Here we take $E =\mathbb{R}^n\times (S^1)^n$, with the distance
defined as the product of the
Euclidean metric on~$\mathbb{R}^n$ and the flat metric on~$(S^1)^n$.

\begin{definition} A complex tropical hypersurface $V_{\infty}\subset
(\mathbb{C}^*)^n$ is the limit (with respect to the Hausdorff
metric on compact sets in $(\mathbb{C}^*)^n$) of a  sequence of a
$J_t$-holomorphic hypersurfaces $V_t\subset (\mathbb{C}^*)^n$
when~$t$ tends to~$\infty$.
\end{definition}

The argument map is the map defined as follows:
\[
\begin{array}{ccccl}
\widetilde{\Arg} & : & (\mathbb{C}^*)^n & \longrightarrow&(S^1)^n, \\
                 &   & (x_1,\ldots ,x_n)& \longmapsto&(\widetilde{\arg} (x_1),\ldots ,\widetilde{\arg} (x_n) ).
\end{array}
\]
We use the following notations: if $z = (x_1,x_2,\ldots ,x_n)\in (\mathbb{C}^*)^n$
and $x_j = \rho_j e^{i\gamma_j}$, then
$\widetilde{\Arg} (z) = (\widetilde{\arg} (x_1),\widetilde{\arg} (x_2),\ldots ,\widetilde{\arg} (x_n)) :=
(e^{i\gamma_1},e^{i\gamma_2},\ldots ,e^{i\gamma_n})$ and \\ $\Arg (z) =
(\arg (x_1),\arg (x_2),\ldots ,\arg (x_n)) := (\gamma_1,\gamma_2,\ldots ,\gamma_n)$.

We complexify the valuation map as follows :
\[
\begin{array}{ccccl}
w & : & \K^* & \longrightarrow&\mathbb{C}^*,\\
  &   &   a  &\longmapsto&w(a ) = e^{\val (a) + i \arg (\xi_{-\val (a)})}.
\end{array}
\]
Let  $\widetilde{\Arg}$ be the argument map $\K^*\rightarrow S^1$ defined by: for any $ a \in \K$ with
$\displaystyle{a = \sum_{j\in A_a}\xi_jt^j}$, $\widetilde{\Arg} (a) =
e^{i\arg (\xi_{-\val (a)})}$ (this map extends the map
$\widetilde{\Arg} : \mathbb{C}^*\rightarrow S^1$ defined by $\rho e^{i\theta} \mapsto e^{i\theta}$).

Applying this map coordinate-wise we obtain the map
\[
\begin{array}{ccccl}
W:&(\K^*)^n&\longrightarrow&(\mathbb{C}^*)^n.
\end{array}
\]

Using Kapranov's theorem~\cite{Kapranov} and degeneration of a complex structures,
Mikhalkin gives an algebraic definition of a complex tropical hypersurfaces
(see~\cite{Mikhalkin-JAMS}) as follows:

\begin{theorem}(See~\cite{Mikhalkin-Topology}.) The set $V_{\infty}\subset (\mathbb{C}^*)^n$
is a complex tropical hypersurface if and only if there
exists an algebraic hypersurface $V_{\mathbb{K}}\subset(\K^*)^n$ such that $W(V_{\mathbb{K}}) = V_{\infty}$.
\end{theorem}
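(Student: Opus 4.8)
The plan is to prove the two implications separately, using Kapranov's theorem~\cite{Kapranov} to control the modulus part of the statement and a direct computation of arguments to control the phase part. This split is natural because $W$ records exactly these two pieces of data: $|w(a)| = e^{\val(a)}$ reproduces the non-archimedean amoeba, whereas $\widetilde{\Arg}(w(a)) = e^{i\arg(\xi_{-\val(a)})}$ records the argument of the leading coefficient. Throughout I identify $(\mathbb{C}^*)^n$ with $\R^n\times(S^1)^n$ via $(\Log,\widetilde{\Arg})$ and measure Hausdorff distance in the product metric introduced above; under this identification $H_t$ acts as the rescaling $(\ell,\theta)\mapsto(\ell/\log t,\theta)$, so that $\Log(V_t) = \tfrac{1}{\log t}\Log(\tilde V_t)$ while arguments are preserved.

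For the implication that every $W(V_\K)$ is a complex tropical hypersurface, I would start from $V_\K = \{F = 0\}$ with $F(x) = \sum_\alpha c_\alpha x^\alpha$, $c_\alpha\in\K$, and build an explicit approximating family. For each real $t\gg 1$ I evaluate the series coefficients to obtain a complex polynomial $f_t(x) = \sum_\alpha c_\alpha(t)\,x^\alpha$, set $\tilde V_t := \{f_t = 0\}$ and $V_t := H_t(\tilde V_t)$, so that each $V_t$ is $J_t$-holomorphic by construction. On the modulus side, $\Log(V_t) = \tfrac{1}{\log t}\Log(\tilde V_t)$ converges to $\Val(V_\K) = \Gamma$, the tropical hypersurface of $F_{\mathrm{trop}}(\zeta) = \max_\alpha\{\val(c_\alpha) + \langle\alpha,\zeta\rangle\}$, by Kapranov's theorem. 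Over the relative interior of a facet of $\Gamma$ exactly two terms $c_\alpha x^\alpha$ and $c_\beta x^\beta$ dominate, and solving the binomial equation $c_\alpha x^\alpha + c_\beta x^\beta = 0$ to leading order fixes both the facet (a condition on $|x|$) and a linear condition on $\langle\alpha-\beta,\Arg x\rangle$ determined by the leading arguments of $c_\alpha,c_\beta$ up to a shift by $\pi$; this is exactly the phase cylinder that $W(V_\K)$ cuts out over the facet. The two Hausdorff inclusions over the open facets then follow from this two-term local model, while the lower-dimensional strata of $\Gamma$, where three or more terms balance, are treated by a local complex-tropical-hyperplane model at the vertices of the dual triangulation and contribute only sets of smaller dimension.

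For the converse, suppose $V_\infty = \lim_k V_{t_k}$ with $V_{t_k} = H_{t_k}(\tilde V_{t_k})$ and $\tilde V_{t_k} = \{f_{t_k} = 0\}$. After a normalization I may assume each $f_{t_k} = \sum_\alpha a_\alpha(t_k)\,x^\alpha$ has support in one fixed lattice polytope, so that the projectivized coefficient vectors lie in a compact space; passing to a subsequence I extract the limiting data $\val_\alpha := \lim_k \tfrac{1}{\log t_k}\log|a_\alpha(t_k)|$ and $\theta_\alpha := \lim_k \arg a_\alpha(t_k)$, allowing $\val_\alpha\in\R$ so that $\K$ is taken with value group $\R$. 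I would then set $c_\alpha := e^{i\theta_\alpha}t^{\val_\alpha}\in\K$ and $V_\K := \{\sum_\alpha c_\alpha x^\alpha = 0\}$. Since the family attached to this $V_\K$ by the forward construction has the same limiting valuations and leading arguments as $\{\tilde V_{t_k}\}$, their Hausdorff limits agree, and the forward direction identifies that common limit with $W(V_\K)$, giving $W(V_\K) = V_\infty$.

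The main obstacle is the phase bookkeeping rather than the modulus estimate. Kapranov's theorem delivers the amoeba part essentially for free, but showing that the arguments converge uniformly to the prescribed phase cylinders---and in particular that nothing survives, and nothing is lost, over the codimension-$\geq 1$ skeleton of $\Gamma$---requires the careful two-term facet analysis and vertex model above, together with an equicontinuity argument that upgrades the facet-by-facet convergence of arguments into genuine Hausdorff convergence on compact subsets of $(\mathbb{C}^*)^n$.
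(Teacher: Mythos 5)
This statement is imported verbatim from Mikhalkin (the paper cites \cite{Mikhalkin-Topology} and gives no proof of its own, only the one-line indication that it rests on Kapranov's theorem and degeneration of complex structures), so there is no in-paper argument to compare against; your proposal has to be judged as a reconstruction of the literature proof. In outline it is the right reconstruction: Kapranov's theorem for the modulus/amoeba part, a two-term binomial model over the facets of $\Gamma$ for the phases, localization at lower-dimensional strata, and a compactness-plus-diagonal argument for the converse is exactly the strategy behind Mikhalkin's proof and behind the localization statements (Viro's Theorem~\ref{Viro} and the truncation identity $W(V_f)\cap\Log^{-1}(U_\zeta)=W(V_{f^{\mathcal{N}_\zeta}})\cap\Log^{-1}(U_\zeta)$) that the paper quotes immediately afterwards.

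Two gaps in your write-up are genuine and should be repaired. First, in the converse direction the phrase ``after a normalization I may assume each $f_{t_k}$ has support in one fixed lattice polytope'' conceals an actual hypothesis rather than a normalization: with the definition of complex tropical hypersurface as stated (an arbitrary Hausdorff limit of $J_t$-holomorphic hypersurfaces, which a priori need not even be algebraic of bounded degree), there is no compact coefficient space to extract a convergent subsequence from. Mikhalkin's theorem is formulated for hypersurfaces with a fixed Newton polytope $\Delta$, and your compactness step is valid only under that standing assumption; you must either invoke it explicitly or prove a degree bound, which your argument does not supply. Second, your final step --- ``since the family attached to this $V_{\K}$ has the same limiting valuations and leading arguments, their Hausdorff limits agree'' --- is precisely the crux, and it does not follow formally from the forward direction applied to each family separately: you need the forward convergence $H_t(V_{f_t})\to W(V_{\K})$ to be \emph{uniform} over coefficient families whose rescaled logarithms and arguments converge to the given data, including control near the codimension-one skeleton of $\Gamma$ where the two-term model degenerates. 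You acknowledge this via ``an equicontinuity argument,'' but that is the hard estimate, not a routine upgrade; as written the converse is circular at this point. A related, smaller repair: for a general $V_{\K}$ one cannot ``evaluate the series coefficients'' at real $t\gg 1$ (a Hahn or formal Puiseux series need not converge); one must first replace each coefficient by its leading monomial, using the reduction $W(V_f)=W(V_{f'})$ quoted in the paper, and only then substitute $t$.
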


Let $\Log_{\mathbb{K}}(x_1,\ldots ,x_n)=(\val (x_1),\ldots ,
\val (x_n))$, which means that~$\K$ is equipped with the
norm defined by $\Vert z\Vert_{\mathbb{K}}= e^{\val (z)}$ for any
$z\in\K^*$.
Then we have the following commutative diagram:
\begin{equation}
\xymatrix{
(\K^*)^n\ar[rr]^{W}\ar[dr]_{\Log_{\mathbb{K}}}&&(\mathbb{C}^*)^n\ar[dl]^{\Log}\cr
&\mathbb{R}^n
}\nonumber
\end{equation}

\begin{theorem}(See~\cite{Mikhalkin-Topology,Mikhalkin-JAMS}.)
Let~$f$ be a polynomial in $\K[x_1,\ldots,x_n]$. Then we have the following:
$$
\lim_{t\rightarrow\infty} H_t(V_{f_{\frac{1}{t}}}) = W(V_f)
$$
with respect to the Hausdorff metric on compact sets in~$(\mathbb{C}^*)^n$,
and where~$f_{\frac{1}{t}}$ is the polynomial in $\mathbb{C}[x_1,\ldots,x_n]$
where we fixed  the variable $t>>1$ of the coefficients of~$f$.
\end{theorem}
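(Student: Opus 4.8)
The plan is to split the asserted identity into its modulus and argument components, using the identification $(\mathbb{C}^*)^n\cong\mathbb{R}^n\times(S^1)^n$ furnished by $(\Log,\widetilde{\Arg})$ together with the product metric on $E=\mathbb{R}^n\times(S^1)^n$, and to establish convergence in each factor. On the argument side nothing needs to be said about the ambient scaling; on the modulus side the whole effect of $H_t$ is the homothety $\Log\circ H_t=\frac{1}{\log t}\Log$. I would first identify the limiting modulus locus of each family. On the right, the commutative diagram gives $\Log\circ W=\Log_{\mathbb{K}}$, so $\Log(W(V_f))=\Log_{\mathbb{K}}(V_f)=\Gamma$ by Kapranov's theorem, where $\Gamma$ is the tropical hypersurface associated to $f$. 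On the left, $\Log(H_t(V_{f_{1/t}}))=\frac{1}{\log t}\mathcal{A}_{f_{1/t}}$, and the amoeba--dequantization limit yields $\frac{1}{\log t}\mathcal{A}_{f_{1/t}}\to\Gamma$ in the Hausdorff metric on compact subsets of $\mathbb{R}^n$. Thus both families share the single limiting modulus locus $\Gamma$, and the remaining content of the theorem is the fiberwise convergence of the argument coordinates over $\Gamma$.

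For the fiberwise analysis I would stratify $\mathbb{R}^n$ by the polyhedral complex $\Gamma$, which is dual to the subdivision of the Newton polytope induced by the valuations of the coefficients. Write $f=\sum_{s\in S}a_s x^s$ with $a_s=\sum_j\xi_{s,j}t^j\in\mathbb{K}$, so that $\widetilde{\Arg}(a_s)=e^{i\arg(\xi_{s,-\val(a_s)})}$ records the phase of the leading coefficient that $W$ reads off. Over the relative interior of a facet $E$ of $\Gamma$ dual to an edge $[s_1,s_2]$ of the subdivision, exactly the two monomials $a_{s_1}x^{s_1}$ and $a_{s_2}x^{s_2}$ attain the tropical maximum and every other term is strictly subdominant. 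The key local fact is that $W(V_f)$ over such points consists precisely of those arguments satisfying the binomial balancing $a_{s_1}x^{s_1}=-a_{s_2}x^{s_2}$ of the two dominant terms, a condition that depends only on the leading-coefficient phases $\widetilde{\Arg}(a_{s_1}),\widetilde{\Arg}(a_{s_2})$. I would then show the rescaled complex solutions obey the same condition asymptotically: if $x$ solves $f_{1/t}=0$ with $\frac{1}{\log t}\Log(x)$ near $E$, then after normalizing by the common dominant magnitude the subdominant monomials contribute $o(1)$, so the two leading terms must cancel to leading order, forcing the argument of $H_t(x)$ toward the binomial locus. This gives the upper Kuratowski inclusion $\limsup_t H_t(V_{f_{1/t}})\subseteq W(V_f)$.

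For the reverse inclusion I would run a solution-lifting argument. Given $z_\infty=W(y)$ with $y\in V_f$ and $\Log(z_\infty)=\zeta$ in the relative interior of a facet $E$ dual to $[s_1,s_2]$, I fix the coordinates $x_2,\dots,x_n$ of a candidate $x_t$ to have log-moduli $(\log t)\zeta_j$ and arguments matching $z_\infty$, and solve the resulting one-variable equation $f_{1/t}(x_1,\dots)=0$ for $x_1$. Near the root of the dominant binomial $a_{s_1}x^{s_1}+a_{s_2}x^{s_2}$ the full polynomial differs only by terms that are exponentially subdominant in $\log t$; a Rouché comparison on a small circle about the binomial root in the $x_1$-variable then produces a genuine root of $f_{1/t}$ whose image under $H_t$ converges to $z_\infty$. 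Carrying this out over a dense set of facet-interior points and invoking the already-established modulus convergence together with continuity extends the approximation to all of $W(V_f)$, giving the lower inclusion $W(V_f)\subseteq\liminf_t H_t(V_{f_{1/t}})$. Combined with the previous paragraph this is Hausdorff convergence on compacts.

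I expect the main obstacle to be the behavior over the lower-dimensional strata of $\Gamma$, namely the cells dual to faces of the subdivision of dimension at least two, where three or more monomials balance simultaneously and the clean binomial/Rouché picture degenerates into a genuinely multi-term problem. There both the domination estimate of the second paragraph and the lifting estimate of the third must be made uniform as the stratum boundary is approached, so that no phase mass is created or lost in passing to the limit. Establishing this uniformity — while confirming that convergence is Hausdorff rather than merely pointwise on each fixed compact subset of $(\mathbb{C}^*)^n$, where escape of tentacles to the boundary is automatically excluded — is where the real work lies, and it is precisely at this point that I would lean on Kapranov's theorem and the non-archimedean amoeba description to control all strata of $\Gamma$ simultaneously.
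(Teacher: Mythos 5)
First, a point of reference: the paper itself gives no proof of this statement --- it is imported verbatim from Mikhalkin \cite{Mikhalkin-Topology,Mikhalkin-JAMS} --- so your proposal can only be measured against the standard argument, which localizes $W(V_f)$ near each point $\zeta$ of the tropical hypersurface to the truncated polynomial $f^{\mathcal{N}_\zeta}$ and then invokes Viro's patchworking (Theorem~\ref{Viro}) to compare $\Log^{-1}(U_\zeta)\cap H_t(V_{f_{\frac{1}{t}}})$ with $\Log^{-1}(U_\zeta)\cap e^U H_t(V_{f^{\mathcal{N}_\zeta}})$ uniformly over \emph{all} cells of the dual subdivision, not just the edges. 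Your splitting into modulus and argument is reasonable, the dequantization of the modulus locus to $\Gamma$ is standard, and your binomial-balancing plus Rouch\'e analysis over facet interiors is essentially correct as far as it goes.

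The genuine gap is in your lower inclusion, and it is worse than the uniformity issue you flag in your last paragraph: the extension step ``carrying this out over a dense set of facet-interior points and invoking \ldots continuity extends the approximation to all of $W(V_f)$'' is false, because $W(V_f)$ is \emph{not} the closure of its part lying over the facets of $\Gamma$. Already for $f=1+x+y$ over $\K$ with trivial coefficients, the fiber of $W(V_f)$ over the vertex of the tropical line is the coamoeba of the complex line $1+\xi+\eta=0$, which contains two open triangles in $(S^1)^2$, whereas the fibers over the three adjacent facets are the shifted circles $\{\pi\}\times S^1$, $S^1\times\{\pi\}$ and $\{\beta=\alpha+\pi\}$, whose Hausdorff limits sweep out only the \emph{boundary} of that coamoeba. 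Argument values in the open triangles over the vertex therefore lie at positive distance from every point your facet-lifting produces, so your construction misses a set of points of $W(V_f)$ of full dimension. Nor can ``leaning on Kapranov's theorem'' repair this: Kapranov's theorem controls only the valuation image $\Log_{\mathbb{K}}(V_f)$, i.e.\ the modulus component, and says nothing about argument fibers over higher-codimension strata. What is actually needed over a cell $\mathcal{N}_\zeta$ of dimension at least two is (i) the localization identity $W(V_f)\cap\Log^{-1}(U_\zeta)=W(V_{f^{\mathcal{N}_\zeta}})\cap\Log^{-1}(U_\zeta)$, reducing to the truncation, and (ii) a multivariate replacement for your one-variable Rouch\'e step --- precisely the isotopy furnished by Viro's Theorem~\ref{Viro}, applied with the hyperplane/simplex case as base case, which is the machinery the paper quotes alongside the theorem. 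Without (i) and (ii), your sketch proves the stated Hausdorff convergence only over the facet stratum of $\Gamma$.
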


It was shown by Mikhalkin that if~$f$ and~$f'$ are polynomials in $\K[x_1,\ldots,x_n]$
such that the coefficients of~$f'$ are the leading monomials of the coefficients of~$f$
then  $W(V_f) = W(V_{f'})$. Also,  if~$\zeta$ is a point in $\Log_{\mathbb{K}}(V_f)$ and~$U_\zeta$ is an open
neighborhood of~$\zeta$ such that  $U_\zeta\cap \Val (V_f)$ is a cone centered  at~$\zeta$,
then $W(V_f)\cap \Log^{-1}(U_\zeta)$ is equal to $W(V_{f^{\mathcal{N}_\zeta}})\cap\Log^{-1}(U_\zeta)$
where~$\mathcal{N}_\zeta$ is a cell of the subdivision that is dual to $\Log_{\mathbb{K}}(V_f)$ and contains~$\zeta$.
By Viro's Theorem~\ref{Viro} there exists $U\in \mathbb{R}^n$ such that for
$t$ sufficiently large $\Log^{-1}(U_\zeta)\cap H_t(V_{f_{\frac{1}{t}}})$ is isotopic to
$\Log^{-1}(U_\zeta)\cap e^UH_t(V_{f^{\mathcal{N}_\zeta}})$ and we have
the following diagram:

\begin{equation}
\xymatrix{
\Log^{-1}(U_\zeta)\cap H_t(V_{f_{\frac{1}{t}}})\ar[rr]^{\simeq}\ar[d]_{t\rightarrow \infty}&&
\Log^{-1}(U_\zeta)\cap e^UH_t(V_{f^{\mathcal{N}_\zeta}})\ar[d]^{t\rightarrow\infty}\cr
\Log^{-1}(U_\zeta)\cap W(V_f)\ar[rr]^{=}&&\Log^{-1}(U_\zeta)\cap e^UW(V_{f^{\mathcal{N}_\zeta}}).
}     \nonumber
\end{equation}

\begin{theorem}(See~\cite{V-90}.)\label{Viro}
Let $f = \sum a_jz^j$ be a polynomial in $\K[x_1,\ldots,x_n]$ where $a_j = \sum\xi_r^jt^r$.
Let~$\zeta$ be a point in $\Log_{\mathbb{K}} (V_f)$ and~$U_\zeta$ be an open  neighborhood of~$\zeta$ such that
$U_\zeta\cap \Log_{\mathbb{K}} (V_f)$ is a cone centered  at~$\zeta$. Let~$\mathcal{N}_\zeta$ be a cell
of the subdivision that is dual to $\Log_{\mathbb{K}}(V_f)$ and contains~$\zeta$, and let $f^{\mathcal{N}_\zeta}(z)$
be the polynomial in $\mathbb{C}[x_1,\ldots,x_n]$ defined as follows:
$$
f^{\mathcal{N}_\zeta}(z) = \sum_{j\in \mathcal{N}_\zeta}\xi_{-val(a_j)}^jz^j.
$$
Then there exists $U\in\mathbb{R}^n$ such that for any sufficiently
large~$t$,  $\Log^{-1}(U_\zeta)\cap H_t(V_{f_{\frac{1}{t}}})$ is isotopic to
$\Log^{-1}(U_\zeta)\cap e^UH_t(V_{f^{\mathcal{N}_\zeta}})$.
\end{theorem}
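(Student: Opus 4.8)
The plan is to reduce the statement to a quantitative domination estimate on $\Log^{-1}(U_\zeta)$ together with a parametrized implicit function argument; this is the analytic heart of Viro's patchworking construction, and it upgrades the bare Hausdorff convergence $H_t(V_{f_{\frac{1}{t}}})\to W(V_f)$ to an isotopy at finite~$t$.

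First I would pass to the rescaled picture induced by~$H_t$, which divides every logarithmic coordinate by~$\log t$: a point~$x$ with $\Log(H_t(x))=w\in U_\zeta$ satisfies $|x_k|=t^{w_k}$. Writing $a_j=\sum_r\xi^j_rt^r$, the dominant behaviour as $t\to\infty$ is $a_j(t)\sim\xi^j_{-\val(a_j)}\,t^{-\val(a_j)}$, whence on this region
\[
|a_j(t)\,x^j|\ \asymp\ |\xi^j_{-\val(a_j)}|\ t^{\,-\val(a_j)+\langle j,w\rangle}.
\]
The exponent $-\val(a_j)+\langle j,w\rangle$ is exactly the affine function whose pointwise maximum over~$j$ is the tropical polynomial, and $\mathcal{N}_\zeta$ is by definition the set of exponents attaining this maximum at $w=\zeta$. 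Shrinking $U_\zeta$ to a small cone about~$\zeta$ in which $\Val(V_f)$ is genuinely conical, I would produce $\delta>0$ so that for every $j\notin\mathcal{N}_\zeta$ this exponent stays below the common leading exponent of the on-cell terms by at least~$\delta$, uniformly on $U_\zeta$.

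Next I would extract the translation~$e^U$. The restriction of $j\mapsto-\val(a_j)$ to~$\mathcal{N}_\zeta$ is an affine function $c_0-\langle U,j\rangle$ supporting the lifted Newton polytope along that cell; with this~$U$ one factors
\[
f_{\frac{1}{t}}(x)\ =\ t^{c_0}\Bigl(f^{\mathcal{N}_\zeta}\!\bigl(t^{-U}x\bigr)+R_t(x)\Bigr),
\]
where the substitution $x\mapsto t^{-U}x$ becomes, after~$H_t$, translation by~$U$ in~$\R^n$, i.e.\ multiplication by~$e^{U}$, and $R_t$ gathers the off-cell terms together with the sub-leading tails of the on-cell coefficients. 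By the domination estimate, in the coordinates $y:=t^{-U}x$ the rescaled remainder $\widehat R_t(y):=R_t(t^{U}y)$ and its first derivatives tend to~$0$ like $t^{-\delta'}$ (with $\delta'>0$) uniformly on compact subsets of the region.

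Finally I would run the isotopy. In the coordinates~$y$, consider the family $G_s(y):=f^{\mathcal{N}_\zeta}(y)+s\,\widehat R_t(y)$, $s\in[0,1]$, so that $G_0=f^{\mathcal{N}_\zeta}$ is fixed while $G_1$ is proportional to $f_{\frac{1}{t}}(t^{U}y)$. Since $\widehat R_t\to0$ in~$C^1$, for~$t$ large the zero loci of the~$G_s$ form a topologically trivial family over a fixed tubular neighbourhood of $\{G_0=0\}=V_{f^{\mathcal{N}_\zeta}}$, by Thom's first isotopy lemma; applying~$H_t$ and the translation~$e^U$ turns the resulting isotopy into the asserted one between $H_t(V_{f_{\frac{1}{t}}})\cap\Log^{-1}(U_\zeta)$ and $e^{U}H_t(V_{f^{\mathcal{N}_\zeta}})\cap\Log^{-1}(U_\zeta)$. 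The hard part is precisely the control needed for this last step: one must bound $\widehat R_t$ uniformly not only on compacta but out along the unbounded tentacle directions of the cone~$U_\zeta$, arrange the isotopy to fix $\Log^{-1}(\partial U_\zeta)$ so that the local models glue across adjacent cones, and accommodate the strata where $V_{f^{\mathcal{N}_\zeta}}$ fails to be smooth, which is what forces the stratified version of the argument rather than a naive implicit-function one.
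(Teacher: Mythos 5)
You should first note what you are being compared against: the paper contains \emph{no proof} of this statement --- it is quoted as Viro's theorem from \cite{V-90} --- so your sketch can only be measured against the standard patchworking argument, and in its first two steps it reproduces that argument correctly. The rescaling $|x_k|=t^{w_k}$, the uniform $\delta$-gap for exponents $j\notin\mathcal{N}_\zeta$ (which, as you should say explicitly, uses compactness of $\overline{U_\zeta}$ together with the fact that every point of the cone $U_\zeta\cap\Log_{\mathbb{K}}(V_f)$ lies in a stratum whose dual cell is a face of $\mathcal{N}_\zeta$), and the extraction of $U$ as the linear part of the affine function supporting the lifted Newton polytope along $\mathcal{N}_\zeta$ are all the right moves; in particular the exact identity $H_t(t^{U}x)=e^{U}H_t(x)$ is precisely the origin of the translation $e^U$ in the statement. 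One bookkeeping correction: the relation $-\val(a_j)=c_0-\langle U,j\rangle$ holds only for those $j$ whose lifted points lie on the relevant upper face; lattice points of $\mathcal{N}_\zeta$ lifted strictly below it must be shunted into $R_t$, or your factorization $f_{\frac{1}{t}}(x)=t^{c_0}\bigl(f^{\mathcal{N}_\zeta}(t^{-U}x)+R_t(x)\bigr)$ is false. (This is arguably a defect of the theorem's own formula, which sums over all $j\in\mathcal{N}_\zeta$.)

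The genuine gap is in your final step. Thom's first isotopy lemma trivializes the family $\{G_s=0\}_{s\in[0,1]}$ only when the family is a proper stratified submersion over the parameter interval, and that is exactly what fails when $V_{f^{\mathcal{N}_\zeta}}$ is singular: a $C^1$-small perturbation of a singular hypersurface need not be isotopic to it, the basic obstruction being $\{xy=0\}$ versus $\{xy=\varepsilon\}$, and this obstruction is realizable in the present setting --- take $f=(x+y+1)^2+t^{-1}x$, whose truncation at the cell dual to the vertex of the tropical curve is the double line $(x+y+1)^2$, while $V_{f_{\frac{1}{t}}}$ is a smooth conic whose portion over $U_\zeta$ has a different number of boundary circles than the pair-of-pants coming from the line; no ``stratified version'' of the argument can bridge this, because the topology genuinely changes at $s=0$. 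So your sketch (and the theorem as stated in the paper) is missing the hypothesis that Viro actually imposes: complete nondegeneracy of the truncations, under which $V_{f^{\mathcal{N}_\zeta}}$ is smooth and transverse to the coordinate subtori, the tubular-neighbourhood/implicit-function argument applies, and your $C^1$-smallness of $\widehat R_t$ closes the proof. It is worth observing that in the only place the paper uses this theorem the hypothesis is automatic: the dual subdivision is assumed to be a triangulation, and a truncation supported on the vertices of a simplex becomes a hyperplane after a monomial change of coordinates, hence is nondegenerate. With that hypothesis made explicit, your route is the standard and correct one.
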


Let now~$f$ be a polynomial in $\mathbb C[x_1^{\pm 1},\cdots , x_n^{\pm 1}]$
with the Newton polytope~$\mathcal{N}$ and amoeba~$\mathcal{A}$ with spine~$\Gamma_f$.
Moreover, assume that the subdivision~$\tau_f$ of~$\mathcal{N}$ dual to~$\Gamma_f$ is a triangulation.

The moment map~$\mu_{f}$ is an embedding with image the interior of~$\mathcal{N}$,
and we have the following commutative diagram:
\begin{equation}
\xymatrix{
(\mathbb{C}^*)^n\ar[rr]^{\Log}\ar[dr]_{\mu_{f}}&&\mathbb{R}^n\ar[dl]^{\Psi_f}\cr
&\mathcal{N}
}\nonumber
\end{equation}
The maps~$\Log$ and~$\mu_{f}$ both have orbits~$(S^1)^n$ as fibers,
and we obtain a reparametrization of~$\mathbb{R}^n$  which we denote
by~$\Psi_{f}$ (see~\cite{GKZ}).

We can now finish the proof of main Theorem~\ref{thm:polyhedralComplex}.
The fact that the function $\alpha \longmapsto \log|a_{\alpha}|$
is concave implies that the same hypothesis holds for any
Hadamard product~$f^{[r]}$ and any positive number~$r$.
Moreover, the  subdivisions of~$\mathcal{N}$ dual to the tropical hypersurfaces
$\Gamma^{[r]}$ associated to~$f^{[r]}$ are all the same (because the $\Gamma^{[r]}$'s
have all the same combinatorial type). More precisely, this means that the complement
in~$\mathbb{R}$ of~$\mathcal{A}$ and~$\mathcal{A}_r$  have the same number of connected components.
In particular, it means that if the amoeba~$\mathcal{A}$ of~$f$ is optimal
then all amoebas~$\mathcal{A}_r$ of~$f^{[r]}$ are optimal.
Moreover, we know that  any lattice simplex can
be identified with the standard simplex by an element of the group $AGL_n(\mathbb{Z})$
of affine linear transformations of~$\mathbb{R}^n$ whose linear part belongs to $GL_n(\mathbb{Z})$.
This means that when we pass to the limit as~$r$ tends to infinity of~(\ref{polyhedralComplexThroughHadamard}),
and we take the truncation of the polynomial to an element of the subdivision~$\tau$
(which is a simplex by our hypothesis) dual to~$\Gamma$ we obtain,  up to a linear transformation,
the polyhedron corresponding to the standard hyperplane.  For simplicity, let~$\zeta$
be a vertex of~$\Gamma^{[r]}$ for sufficiently large~$r$, and~$U_\zeta$ be an open neighborhood of~$\zeta$.
Then by Viro's Theorem \ref{Viro} we have $\Log^{-1}(U_\zeta)\cap V_{f^{[r]}}$ is isotopic to
$\Log^{-1}(U_\zeta)\cap e^UV_{f^{[r];\mathcal{N}_\zeta}}$ for some $U\in \mathbb{R}^n$ where $f^{[r];\mathcal{N}_\zeta}$
is the truncation of~$f^{[r]}$ to the dual~$\mathcal{N}_\zeta$ of the vertex~$\zeta$, which is a simplex by hypothesis.
But this last set is, up to an affine transformation, the polyhedral corresponding to a hyperplane.
Using Viro's patchwork we obtain a polyhedral complex.
In particular, if $n=2$, we obtain a simplicial complex, because the polyhedron corresponding
to the line is a $2$-dimensional simplex  (i.e. a triangle in this case).~\hfill~$\square$


\section{The polyhedral complex of a general algebraic hypersurface}\label{sec:topologicalRelation}

The explicit analytic formula~(\ref{polyhedralComplexThroughHadamard}) for the
polyhedral complex of an algebraic hypersurface cannot work in the general case.
Indeed, if (the absolute values of) all coefficients of its defining polynomial~$f$ are
equal to~1, then the Hadamard power in~(\ref{polyhedralComplexThroughHadamard})
yields nothing new and~$\mathcal{P}_{f}^\infty$ will in general remain curved.
Example~\ref{ex:1+3x+3y+x2y+4x3y+xy2+10x2y2+4xy3} below show however that the condition
in Theorem~\ref{thm:polyhedralComplex} is far from being necessary. Numerous computer
experiments suggest the following conjecture.

\begin{conjecture}\label{conj:thePerfectPolyhedralComplex}
Let $f(x_1,\ldots,x_n)\in\C[x_{1}^{\pm 1},\ldots,x_{1}^{\pm 1}]$ be a Laurent polynomial.
Denote by $\overline{\mathcal{A}}_{f}\subset\mathcal{N}_f$ its compactified amoeba and by~$\{M\}$
the set of (nonempty) connected components of the complement of~$\overline{\mathcal{A}}_{f}$ in
the Newton polytope~$\mathcal{N}_f.$ We furthermore denote by~$\nu(M)\in\mathcal{N}_f\cap\Z^n$
the order~\cite{FPT} of such a component.

There exists a polyhedral complex~$\mathcal{P}_f\subset\mathcal{N}_f$
with the following properties:

1. The polyhedral complex~$\mathcal{P}_f$ is a deformation retract
of the compactified amoeba~$\overline{\mathcal{A}}_{f}.$

2. For any complement component~$M$ of~$\mathcal{N}_f\setminus\overline{\mathcal{A}}_{f}$
the only integer point that belongs to this component is its order: $M\cap\Z^n = \nu(M).$

\begin{example}\label{ex:1+3x+3y+x2y+4x3y+xy2+10x2y2+4xy3}
\rm
Consider the bivariate polynomial $f(x,y)=1+3x+3y+x^2y+4x^3y+xy^2+10x^2y^2+4xy^3.$
The amoeba~$\mathcal{A}_f$, the compactified amoeba~$\overline{\mathcal{A}}_f$,
and the polyhedral complex~$\mathcal{P}_{f}^{\infty}$ are shown in Figure~\ref{fig:1+3x+3y+x2y+4x3y+xy2+10x2y2+4xy3}.
We observe that this polynomial is not dense as the integer point $(1,1)$
does to belong to its support. Moreover, this polynomial is not optimal since
the complement of its amoeba lacks bounded components of orders $(1,1),(1,2)$ and $(2,1).$
The set of integer points in the Newton polygon~$\mathcal{N}_f$ that do not belong
to the polyhedral complex~$\mathcal{P}_{f}^{\infty}$ consists of the orders of connected components
in $\mathcal{N}_f\setminus\overline{\mathcal{A}}_f.$ In this example, the polyhedral
complex~$\mathcal{P}_{f}^{\infty}$ has been computed as the limit of the weighted compactified
amoebas of the Hadamard powers of~$f(x,y).$

\begin{figure}
\includegraphics[width=4.5cm]{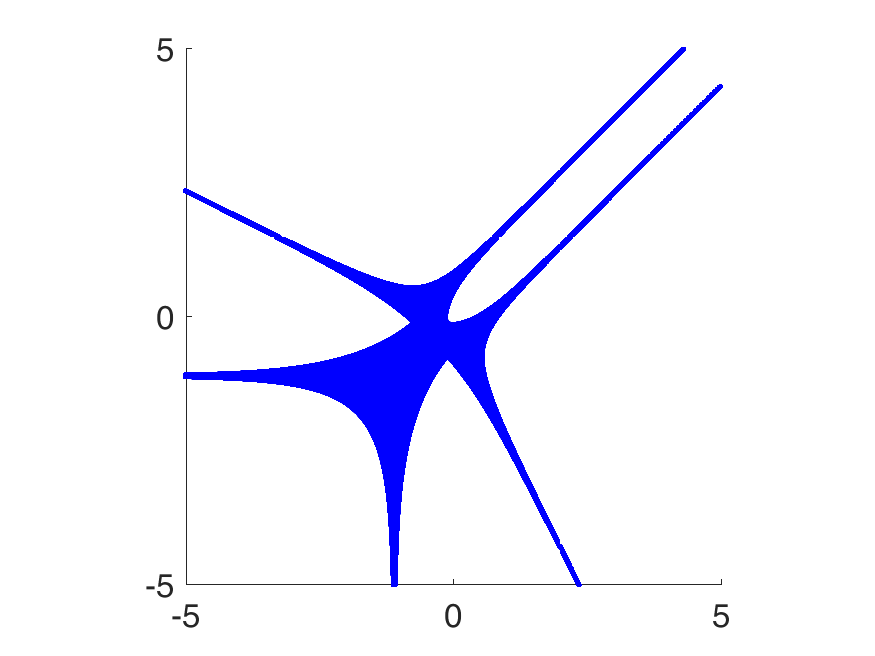}\quad
\includegraphics[width=4.5cm]{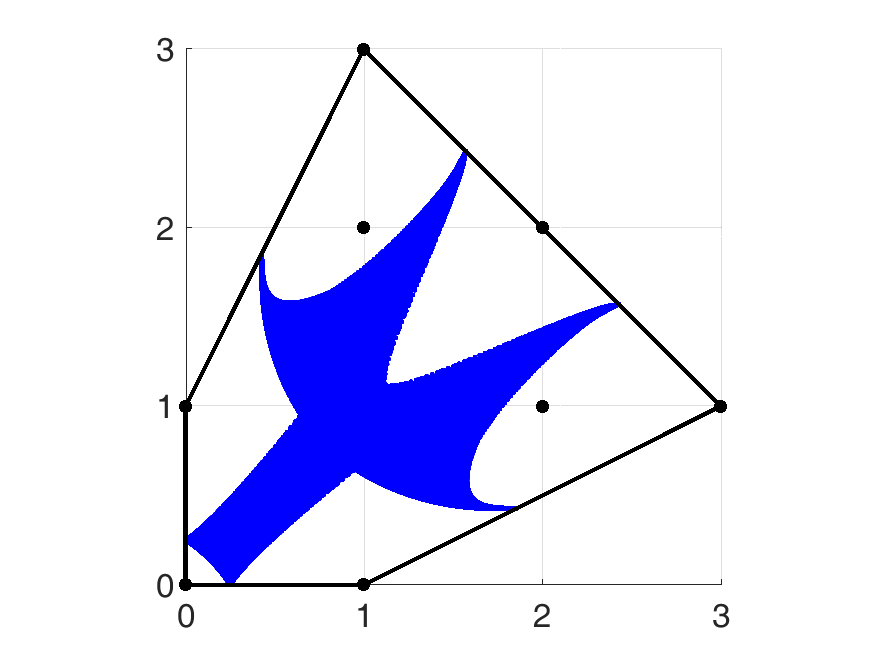}\quad
\includegraphics[width=4.5cm]{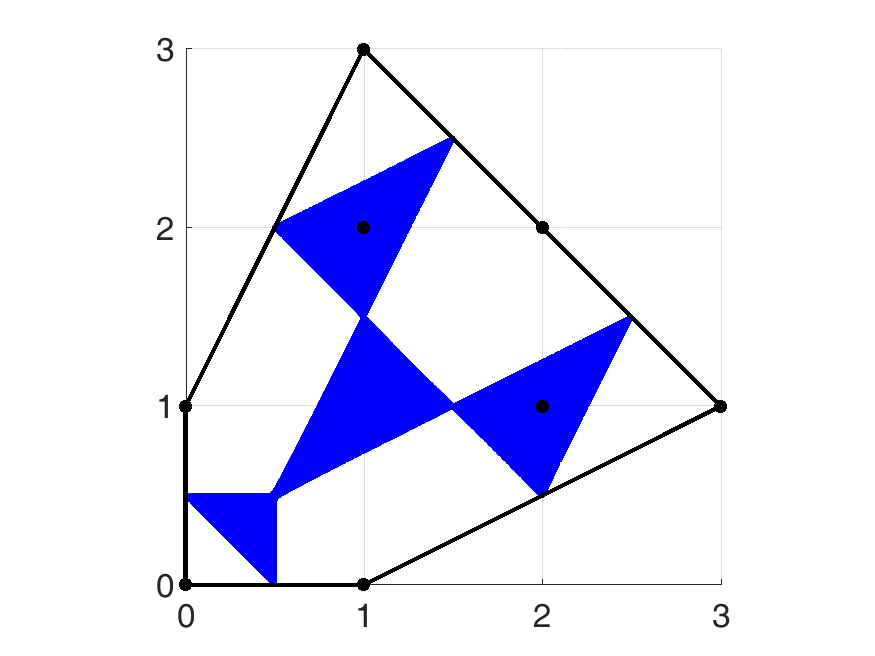}
\caption{The affine amoeba, the compactified amoeba and the
polyhedral complex of the polynomial $1+3x+3y+x^2y+4x^3y+xy^2+10x^2y^2+4xy^3$}
\label{fig:1+3x+3y+x2y+4x3y+xy2+10x2y2+4xy3}
\end{figure}

\end{example}

\end{conjecture}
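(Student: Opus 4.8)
The plan is to transport the problem from $\mathbb{R}^n$ into the Newton polytope via the reparametrization $\Psi_{f^{[r]}}$ and then to analyze the limit $r\to\infty$ by a local-to-global argument modelled on the hyperplane case. First I would invoke the commutative diagram relating $\Log$ and the weighted moment map, which gives $\mu_{f^{[r]}}=\Psi_{f^{[r]}}\circ\Log$ and hence $WC\mathcal{A}(f^{[r]})=\Psi_{f^{[r]}}(\mathcal{A}_{f^{[r]}})$. Since $\Psi_{f^{[r]}}$ is a diffeomorphism onto the interior of $\mathcal{N}$, the whole question becomes one about the amoeba $\mathcal{A}_{f^{[r]}}$ together with its image under a reparametrization that compactifies it inside $\mathcal{N}$.

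Second I would record the behaviour under Hadamard powers. Because $\log|a_\alpha^{r}|=r\log|a_\alpha|$, the concavity hypothesis on $\alpha\mapsto\log|a_\alpha|$ is preserved for every $r>0$, and the tropical polynomial $f_{trop}^{[r]}(\zeta)=\max_\alpha\{r\log|a_\alpha|+\langle\alpha,\zeta\rangle\}$ has the same dual subdivision $\tau$ as $f_{trop}$. Consequently the spine $\Gamma^{[r]}$ has a combinatorial type independent of $r$, its dual subdivision is the fixed triangulation $\tau$, and $\pi_0(\mathbb{R}^n\setminus\mathcal{A}_{f^{[r]}})=\pi_0(\mathbb{R}^n\setminus\mathcal{A})$ for all $r>0$. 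This already secures the topological statement once the limit is shown to exist as a polyhedral complex.

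Third comes the local analysis. Fixing a vertex $\zeta$ of $\Gamma^{[r]}$ and a small neighbourhood $U_\zeta$, Viro's Theorem~\ref{Viro} identifies $\Log^{-1}(U_\zeta)\cap V_{f^{[r]}}$, up to isotopy, with the truncation to the dual cell $\mathcal{N}_\zeta$; by hypothesis this cell is a top-dimensional simplex, so the truncated polynomial is a vertex polynomial supported on a simplex. Proposition~\ref{prop:triangles} then computes the corresponding limit piece as the convex hull of the midpoints of the edges of $\mathcal{N}_\zeta$, and an element of $AGL_n(\mathbb{Z})$ carrying $\mathcal{N}_\zeta$ to the standard simplex identifies this piece with the standard hyperplane model. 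Gluing these pieces over all simplices of $\tau$ by means of Viro's patchworking produces a single polyhedral complex: adjacent simplices share a common face, and since the midpoint-convex-hull construction is natural with respect to faces, the local pieces agree along the shared faces and assemble without gaps or overlaps. When $n=2$ each simplex of $\tau$ is a triangle whose associated hyperplane piece is again a triangle, so every cell is a simplex and $\mathcal{P}_f^{\infty}$ is a simplicial complex.

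The principal difficulty I anticipate is making the passage to the limit uniform rather than pointwise. Viro's theorem furnishes, for each fixed large $r$ and each vertex $\zeta$, an isotopy valid on $\Log^{-1}(U_\zeta)$, but one must show that as $r\to\infty$ the reparametrized images $\Psi_{f^{[r]}}(\mathcal{A}_{f^{[r]}})$ converge set-theoretically to the piecewise-linear object assembled from the local models, and in particular that the contributions of the lower-dimensional cells of $\tau$ --- the neighbourhoods of the edges shared between adjacent simplices --- collapse onto the common faces in the limit. Controlling these transition regions, so that the patched limit is genuinely a polyhedral complex and the matching along shared faces is exact, is where the quantitative estimates will have to be carried out with care.
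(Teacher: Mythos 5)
Your proposal does not address the statement it is attached to. The statement here is Conjecture~\ref{conj:thePerfectPolyhedralComplex}, which the paper leaves \emph{unproved} (it is supported only by computer experiments), and which asserts the existence of a suitable polyhedral complex for an \emph{arbitrary} Laurent polynomial $f$, with no hypotheses whatsoever on the coefficients. What you have written is instead a sketch of the proof of Theorem~\ref{thm:polyhedralComplex}: every ingredient you use --- the concavity of $\alpha\mapsto\log|a_\alpha|$, its preservation under Hadamard powers, the assumption that the dual subdivision $\tau$ is a triangulation, the reduction of each dual cell to the standard simplex and the hyperplane model of Proposition~\ref{prop:triangles} --- is a hypothesis or a step available only in the setting of that theorem. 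None of these is granted by the conjecture. Indeed the paper explicitly points out, at the start of Section~\ref{sec:topologicalRelation}, that the Hadamard-power construction $\lim_{r\to\infty}WC\mathcal{A}(f^{[r]})$ cannot work in general: if all $|a_\alpha|=1$ then $f^{[r]}$ has the same weighted moment map for every $r$ and the limit remains a curved set, not a polyhedral complex. So the central device of your argument is known to fail for exactly the class of polynomials the conjecture is meant to cover.

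Two further points would remain open even in the restricted setting. First, property~1 of the conjecture demands that $\mathcal{P}_f$ be a \emph{deformation retract} of $\overline{\mathcal{A}}_f$; your argument, like the proof of Theorem~\ref{thm:polyhedralComplex}, only delivers an equality of $\pi_0$ of the complements, which is strictly weaker. Second, property~2 --- that each complement component contains exactly one lattice point, namely its order $\nu(M)$ --- is not touched by your sketch at all; nothing in the Viro-patchworking analysis of the local hyperplane pieces controls which integer points fall into which complement component. To make progress on the conjecture one would need a genuinely different construction of $\mathcal{P}_f$ (the paper offers none), not a rerun of the proof of the main theorem.
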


Another deformation retract of an amoeba, its spine, has been
defined and studied in~\cite{Passare-Rullgaard}. The spine of
the (affine) amoeba of a polynomial can be defined as the set where
certain piecewise linear approximation of the Ronkin
function~\cite{Passare-Rullgaard} associated with this polynomial is nonsmooth.
The spine of an amoeba is a proper subset of the amoeba itself
and thus inherits all the problems that arise when one needs to
determine the topological type of an amoeba or its position in
the ambient affine space.

Polyhedral complexes related to amoebas have also been used in~\cite{Mikhalkin-Topology}
for the analysis of topology of nonsingular algebraic hypersurfaces in projective spaces.
The approach developed in~\cite{Mikhalkin-Topology} is based on Viro's patchworking
and allows one to treat a complex algebraic hypersurface as a singular fibration over
a polyhedral complex, the generic fiber being isomorphic to a smooth torus. This polyhedral
complex is a subset of the Newton polytope of the defining polynomial of the algebraic
hypersurface and is dual to a certain lattice subdivision of this polytope.
However, it is general different from~(\ref{polyhedralComplexThroughHadamard})
and has a different dimension. Besides, it is defined through patchworking rather
than the explicit formula~(\ref{polyhedralComplexThroughHadamard}).

\begin{example}\label{ex:x+x2+y+xy3+x4y2+3x3y+10xy+10x2y+10xy2+15x2y2+10x3y2} \rm
The zero locus of the polynomial $p(x,y)=x+x^2+y+xy^3+x^4y^2+3x^3y+10xy+10x^2y+10xy^2+15x^2y^2+10x^3y^2$
is an optimal hypersurface. The computation of its affine amoeba requires
considerable accuracy due to the very different relative size of
the bounded connected components of its complement
(see Figure~\ref{fig:x+x2+y+xy3+x4y2+3x3y+10xy+10x2y+10xy2+15x2y2+10x3y2}).
The compactified amoeba of this polynomial is also a difficult set to compute
since the vertex components are mapped by the moment map to very small regions
inside the Newton polygon. Figure~\ref{fig:x+x2+y+xy3+x4y2+3x3y+10xy+10x2y+10xy2+15x2y2+10x3y2}
features the evolution of the weighted compactified amoeba of the Hadamard powers of
the polynomial $p(x,y)$ as the exponent ranges from~1 to~3.
The rightmost down picture in Figure~\ref{fig:x+x2+y+xy3+x4y2+3x3y+10xy+10x2y+10xy2+15x2y2+10x3y2}
shows the weighted compactified amoeba of the 3rd Hadamard
power of a deformation of $p(x,y).$ The small bounded component of the
complement to this set shrinks and vanishes precisely at its order, that is,
at the point $(2,2)\in\mathcal{N}_p.$

\begin{figure}
\includegraphics[width=4.5cm]{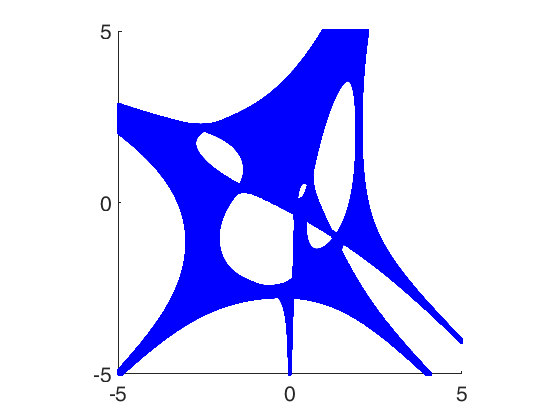} \quad
\includegraphics[width=4.5cm]{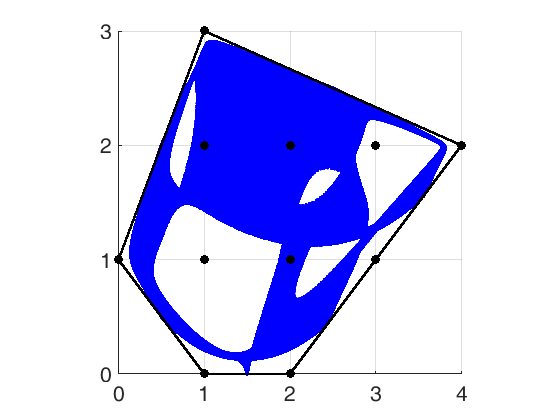} \quad
\includegraphics[width=4.5cm]{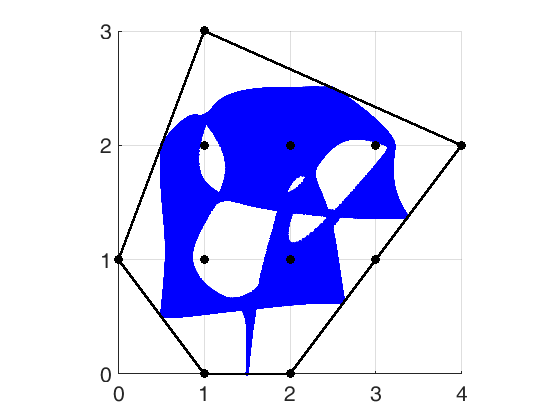} \quad
\includegraphics[width=4.5cm]{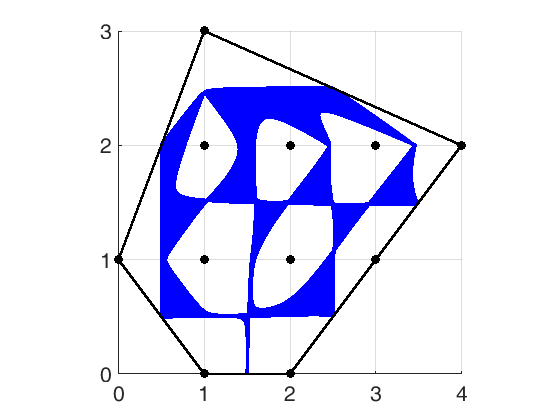} \quad
\includegraphics[width=4.5cm]{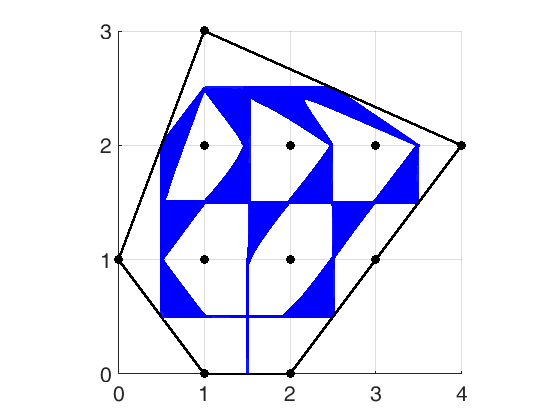} \quad
\includegraphics[width=4.5cm]{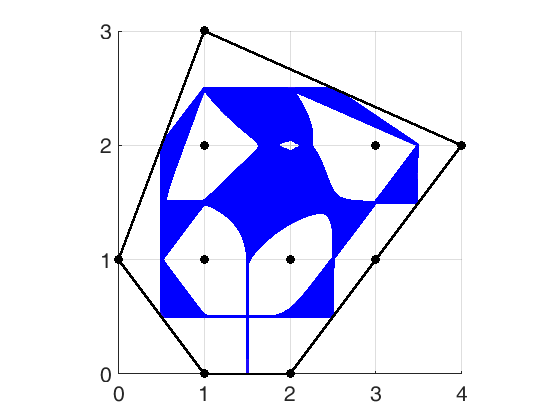}
\caption{The affine amoeba, the compactified amoeba, the
weighted compactified amoebas of the 1st, 2nd and 3rd Hadamard powers of the polynomial
$x+x^2+y+xy^3+x^4y^2+3x^3y+10xy+10x^2y+10xy^2+15x^2y^2+10x^3y^2,$ and a bounded component
of its deformation vanishing at the lattice point $(2,2)$}
\label{fig:x+x2+y+xy3+x4y2+3x3y+10xy+10x2y+10xy2+15x2y2+10x3y2}
\end{figure}

\end{example}


\end{document}